\documentclass[11pt]{amsart}
\usepackage{
amssymb,
amsmath}

\usepackage[bookmarks,colorlinks,pagebackref]{hyperref} 

\usepackage{graphicx}
\usepackage{hyperref}

\usepackage[latin1]{inputenc}

\usepackage{mathpazo}
\usepackage[scaled=.95]{helvet}
\usepackage{courier}

\usepackage[all,cmtip,2cell]{xy}

\numberwithin{equation}{section}
\theoremstyle{plain} 
\newtheorem{proposition}{Proposition}[section]  
\newtheorem{lemma}[proposition]{Lemma}
\newtheorem{corollary}[proposition]{Corollary} 
\newtheorem{theorem}[proposition]{Theorem} 

\theoremstyle{definition} 

\newtheorem{remark}[proposition]{Remark} 

\newtheorem{example}[proposition]{Example} 
\newtheorem{recalls}[proposition]{Recalls}

\newtheorem{problem}[proposition]{Problem}

\newtheorem{notation}[proposition]{Notation}

\newtheorem{notation and recalls}[proposition]{Notations and Recalls}

\newcommand\Supp{\operatorname{Supp}}
\newcommand\Ass{\operatorname{Ass}}

\newcommand\Ann{\operatorname{Ann}}

\newcommand\Tor{\operatorname{Tor}}
\newcommand\Hom{\operatorname{Hom}}

\newcommand\Ext{\operatorname{Ext}}

\newcommand\Coker{\operatorname{Coker}}

\newcommand\height{\operatorname{height}}
\newcommand\Spec{\operatorname{Spec}}

\author[P.~Schenzel]{Peter Schenzel}
\title[Matlis reflexivity]
{Notes on Matlis reflexive modules}
	
\date{September 1, 2026}

\begin{document}

\begin{abstract} 
	Let $M$ denote a module over a local Noetherian ring $(R,\mathfrak{m})$ with residue field $\Bbbk$ and 
	its injective hull $E_R$. Let $D_R(\cdot) = \Hom_R(\cdot,E_R)$ denote the Matlis duality functor. Then $M$ 
	is called Matlis reflexive whenever the injection $M \to D_R^2(M)$ is an isomorphism.  The following results 
	are proved: (1) For an ideal 
	of $R$ a reflexive module is quasi-complete, generalizing part of Belshoff's results (see \cite{Br}). (2) We improve Z\"oschinger's numerical criterion for Matlis reflexivity (see \cite{Zh2}). (3) As shown by Enochs 
	(see \cite{Ee}) for a prime ideal and the injective hull $E_R(R/\mathfrak{p})$ it follows that $D_R(E_R(R/\mathfrak{p}))$ is the completion of a free $R_{\mathfrak{p}}$-module of rank $\tau_{\mathfrak{p}}$, the Enochs number of $\mathfrak{p}$.  We characterize  when $\tau_{\mathfrak{p}} = 1$, extending author's 
	result (see \cite{Sp16}).
\end{abstract}

\subjclass[2020]
{Primary: 13C05 ; Secondary: 13C11, 13B35}
\keywords{Matlis reflexive module, injective hull, completion}

\maketitle


\section{Introduction}
Let $(R,\mathfrak{m})$ denote a local Noetherian ring with $E_R = E_R(\Bbbk)$ the injective hull of
the residue field $\Bbbk = R/\mathfrak{m}$. Since the work of Matlis (see \cite{Me}) the functor $D_R(\cdot) = \Hom_R(\cdot,E_R)$ 
plays a central role in Commutative Algebra. If $R$ is complete in its $\mathfrak{m}$-adic 
topology $D_R$ provides an anti-equivalence between finitely generated and Artinian $R$-modules 
(see \cite{Me} or \cite{EJ}). Moreover, the natural map $M \to D_R^2(M)$ is pure injective. In case 
$R$ is complete it is an isomorphism, whenever $M$ is finitely generated or Artinian. Furthermore, 
the $R$-module $M$ is called Matlis reflexive if it is an isomorphism. For a complete local ring it 
is shown (see \cite{Ee} or \cite{Zt}) that $M$ is Matlis reflexive if and only if there is a finitely generated 
submodule $N \subseteq M$ such that $M/N$ is Artinian. 
Here we contribute with several new results related to Matlis reflexivity 
for local rings with a view towards new applications in recent research.

\begin{theorem} \label{int-1}
	Let $I$ denote an ideal of $(R,\mathfrak{m})$ a local ring. Suppose that $M$ is Matlis reflexive. 
	Then $M$ is $I$-quasi-complete, i.e. the natural map $M \to \hat{M}^I$ to the $I$-adic completion 
	is onto. Moreover, $\hat{M}^I$ is Matlis reflexive as an $\hat{R}^I$-module  as well as an $R$-module. 
\end{theorem}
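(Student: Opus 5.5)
The plan is to use the structure theorem for Matlis reflexive modules recalled in the introduction (Enochs, Zöschinger, Belshoff) rather than work with $D_R^2$ directly. If $M$ is Matlis reflexive, then $S := R/\Ann_R M$ is a complete local ring. Moreover, there is an exact sequence
\[
0 \to N \to M \to A \to 0
\]
with $N$ finitely generated and $A$ Artinian. I would regard $M$ as an $S$-module throughout. The $I$-adic filtration of $M$ is the $IS$-adic one, so I may assume that $R$ is complete. In that case $N$ is $I$-adically complete, since finitely generated modules over a complete local ring are $\mathfrak{m}$-adically complete, hence $I$-adically complete for $I \subseteq \mathfrak{m}$.

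\emph{Step 1: quasi-completeness.} For each $n$ I would consider the exact sequences
\[
0 \to N/(N\cap I^nM) \to M/I^nM \to A/I^nA \to 0 .
\]
Since $A$ is Artinian, the chain $I^nA$ stabilizes, say $I^nA = I^kA$ for all $n \ge k$. Hence $\varprojlim A/I^nA = A/I^kA$, and $A \to \hat{A}^I$ is onto. The first system has surjective transition maps, so it is Mittag-Leffler. Therefore $\varprojlim$ preserves exactness, and $\hat{M}^I$ is an extension of $\hat{A}^I$ by $L := \varprojlim N/(N\cap I^nM)$.

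\emph{Main obstacle.} Because $M$ is not finitely generated, Artin--Rees does not apply. So I cannot identify the topology induced on $N$ with the $I$-adic one. Instead I would argue as follows. We have $K_n := N\cap I^nM \supseteq I^nN$, so $N/K_n$ is a quotient of $N/I^nN$. Over the complete ring $R$, every submodule of the finitely generated module $N$ is closed in the $\mathfrak{m}$-adic topology. Chevalley's lemma, applied to the decreasing chain $K_n$, then shows that $N \to \varprojlim N/K_n$ is surjective. One obtains this by lifting a compatible family successively, using the completeness of $N$ and the Mittag-Leffler property of the kernels $K_n/I^nN$. A diagram chase with $N \to L$ and $A \to \hat{A}^I$, both onto, then gives that $M \to \hat{M}^I$ is onto.

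\emph{Step 2: reflexivity of $\hat{M}^I$.} By Step 1, $\hat{M}^I$ is a quotient of $M$. A quotient of a module that is finitely generated-by-Artinian over the complete ring $S$ is again of that form. The image of $N$ is finitely generated, and the resulting quotient is a quotient of $A$, hence Artinian. So $\hat{M}^I$ is Matlis reflexive over $R$ by the characterization quoted in the introduction.

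For the $\hat{R}^I$-structure, I would note that $E_R$ is $\mathfrak{m}$-torsion, hence $I$-torsion. Therefore $E_R = E_{\hat{R}^I}$, and $\Hom_R(X,E_R) = \Hom_{\hat{R}^I}(X,E_R)$ for every $\hat{R}^I$-module $X$. So the two Matlis duals of $\hat{M}^I$ coincide, and the biduality maps agree. Reflexivity over $R$ and over $\hat{R}^I$ are therefore equivalent.
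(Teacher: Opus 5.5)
Your treatment of the $\hat R^I$-structure has a genuine gap. It is not true that $\Hom_R(X,E_R)=\Hom_{\hat R^I}(X,E_R)$ for every $\hat R^I$-module $X$. Suppose $R$ is not $I$-adically complete, so that $\hat R^I/R\neq 0$. Since $E_R$ is a cogenerator, there is a nonzero $R$-linear map $\hat R^I\to E_R$ vanishing on $R$. It cannot be $\hat R^I$-linear, because an $\hat R^I$-linear map on $\hat R^I$ that kills $1$ is zero. For the same reason the claimed equivalence of reflexivity fails: $\hat R$ is reflexive over $\hat R$ but not over a non-complete $R$, since its submodule $R$ is not (Recall~\ref{mat-1}~(B),(D)). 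That $E_R$ is $I$-torsion does not help when the source is not $I$-torsion.

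You need an argument specific to $\hat M^I$, and you already have the ingredients.
\begin{itemize}
\item The ideal $\mathfrak a=\Ann_RM$ kills every $M/I^nM$, hence $\mathfrak a\hat R^I$ kills $\hat M^I$.
\item Since $S=R/\mathfrak a$ is complete local, it is $IS$-adically complete, so $\hat R^I/\mathfrak a\hat R^I\cong \hat S^{IS}=S$. Thus $R$ and $\hat R^I$ both act on $\hat M^I$ through surjections onto $S$.
\item For a surjection $T\to S$ and an $S$-module $X$ one has $\Hom_T(X,E_T)\cong\Hom_S(X,\Hom_T(S,E_T))\cong\Hom_S(X,E_S)$, which is again an $S$-module.
\end{itemize}
So $D_R$, $D_{\hat R^I}$, $D_S$ and their squares agree on $\hat M^I$, and reflexivity over $S$ transfers to both rings. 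The same remark is needed in Step~2 to pass from ``reflexive over the complete ring $S$'' to ``reflexive over $R$''. Alternatively, quote Recall~\ref{mat-1}~(B) there, since $\hat M^I$ is a quotient of $M$.

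Step 1 is correct and takes a genuinely different route from the paper. The paper uses no structure theory. It shows $\varprojlim^1 I^nM=0$ by comparing $0\to\varprojlim I^nM\to\prod I^nM\to\prod I^nM\to\varprojlim^1 I^nM\to 0$ with the $D_R$-dual of the standard presentation of $\varinjlim D_R(I^nM)$. That uses only that each $I^nM$ is reflexive, so it works for any tower of reflexive modules. Your route instead needs the full Belshoff--Enochs--Garc\'{\i}a Rozas theorem, including completeness of $R/\Ann_RM$; the paper recalls only the finitely-generated-by-Artinian part, \cite[Corollary 7]{BEe}. It also needs Chevalley's lemma. In return it exhibits $\hat M^I$ concretely as an extension of $A/I^kA$ by a quotient of $N$.

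One caveat in Step 1: the Mittag-Leffler property of $\{K_n/I^nN\}$ is not automatic. For $R=\Bbbk[[x,y]]$, $I=(x)$, $N=R$ and $K_n=(x^n,y^n)$, the images $(x^n,y^m)/(x^n)$ never stabilize. In your situation it does hold, because $K_n/I^nN$ is the image of $\Tor_1^R(R/I^n,A)$, which is Artinian. Alternatively, Chevalley's lemma alone suffices:
\begin{itemize}
\item put $K=\bigcap_n K_n$ and choose increasing $n(t)$ with $K_{n(t)}\subseteq K+\mathfrak m^tN$;
\item take the $\mathfrak m$-adic limit of the $x_{n(t)}$ in $N/K$;
\item use that each $K_n$ is closed to see that this limit lifts the given compatible family.
\end{itemize}
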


For an $R$-module $M$ and $\mathfrak{p} \in \Spec R$ let $\mu(\mathfrak{p},M) = \dim_{k(\mathfrak{p})} 
\Hom_{R_{\mathfrak{p}}}(k(\mathfrak{p}),M_{\mathfrak{p}})$ denote the Bass number of $M$ with respect to 
$\mathfrak{p}$. 

\begin{theorem} \label{in-2}
	Let $M$ denote a module over a local ring $(R,\mathfrak{m})$. Let $\mathfrak{p}$ denote a prime 
	ideal such that $\mu(\mathfrak{p},M) = \mu(\mathfrak{p},D_R^2(M)) \not= 0$. Then these numbers are finite 
	and $R/\mathfrak{p}$ is complete. Moreover $M$ is Matlis reflexive if and only if 
	$\mu(\mathfrak{p},M) = \mu(\mathfrak{p},D_R^2(M))$ for all prime ideals  $\mathfrak{p} \in \Ass_RM$. 
\end{theorem}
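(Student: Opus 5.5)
The plan is to work with the pure injection $\iota\colon M \to D_R^2(M)$ and compare socles at primes. Two standard facts will be used. First, $D_R^2(M) = \Hom_R(D_R(M),E_R)$ is pure injective. Second, $\Hom_R(R/\mathfrak{p}, D_R^2(M)) \cong D_R(R/\mathfrak{p}\otimes_R D_R(M)) = D_R(D_R(M)/\mathfrak{p}D_R(M))$, and localization is compatible with this.

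\textbf{Finiteness of the Bass numbers and completeness of $R/\mathfrak{p}$.} Set $N = D_R(M)/\mathfrak{p}D_R(M)$, a module over the domain $A = R/\mathfrak{p}$. Then $\Hom_R(R/\mathfrak{p}, D_R^2(M)) \cong \Hom_A(N, E_A)$ with $E_A = \Hom_R(A,E_R)$. Localizing at $\mathfrak{p}$ gives
\[
\mu(\mathfrak{p},D_R^2(M)) = \dim_{k(\mathfrak{p})} \Hom_A(N,E_A)\otimes_A k(\mathfrak{p}) .
\]
The key point is that $D_A(N)$ is a Matlis dual over $A$, so its torsion-free part $D_A(N)/\text{torsion}$ is pure injective. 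Hence, if it is nonzero, it contains a copy of the $\mathfrak{m}$-adic completion of a free $A$-module of rank equal to its $k(\mathfrak{p})$-dimension. Concretely, $\Hom_A(E_A,E_A)\cong \hat A$, and the rank of $\hat{A}$ over $A$ is $\dim_{k(\mathfrak{p})}\hat A\otimes_A k(\mathfrak{p})$, which is infinite unless $A=\hat A$. More precisely, if $\mu(\mathfrak{p},D_R^2(M))=r\neq 0$, then $D_A(N)$ maps onto, or contains, the completion of $A^{(r)}$. Its generic rank is then at least $r\cdot \dim_{k(\mathfrak{p})}(\hat{A}\otimes k(\mathfrak{p}))$, and it is infinite if $r$ is infinite.

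On the other side, $\iota$ induces an injection $\Hom(k(\mathfrak{p}),M_{\mathfrak{p}}) \hookrightarrow \Hom(k(\mathfrak{p}),D_R^2(M)_{\mathfrak{p}})$. The elements of $M$ annihilated by $\mathfrak{p}$ generate an $A$-module $M'$ whose generic rank is $\mu(\mathfrak{p},M)$. Purity of $\iota$ shows that the completion-type part is not hit unless it is already "small". The equality $\mu(\mathfrak{p},M)=\mu(\mathfrak{p},D_R^2(M))$ then forces the numbers to be finite and $\hat A\otimes k(\mathfrak{p})$ to be one-dimensional, i.e. $A=\hat A$. This is the step I expect to be the main obstacle, and I would model it on Enochs' description (abstract item (3)) that $D_R(E_R(R/\mathfrak{p}))$ is the completion of a free $R_\mathfrak{p}$-module. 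The route is to show the $\mathfrak{p}$-socle of $D^2_R(M)$ is a completion $\widehat{F}$ of a free module $F$ over $A_{(0)}$, with the image of $M$ landing in $F$. Equality of ranks then forces $F=\widehat F$, which happens only for finite rank and complete $A$.

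\textbf{Reflexivity criterion.} One direction is trivial: if $\iota$ is an isomorphism, the Bass numbers agree. For the converse, assume equality for all $\mathfrak{p}\in\Ass_R M$. Let $C=\Coker\iota$; we want $C=0$. Since $\iota$ is pure, it is in particular an essential-type comparison on injective hulls. Here I would use the first part: for $\mathfrak{p}\in \Ass M$ the Bass numbers are finite and equal, so $E(M)$ and $E(D^2_R M)$ have the same $\mathfrak{p}$-components for those $\mathfrak{p}$. One shows first that $\Ass D_R^2(M)=\Ass M$. This holds because $M\to D^2_R(M)$ is pure, and a prime associated to $D_R^2(M)$ but not to $M$ would yield extra socle, which can be detected after applying $\Hom(R/\mathfrak{p},-)$ and dualizing.

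It then follows that $M\subseteq D^2_R(M)$ is essential. Using the completeness of $R/\mathfrak{p}$ and finiteness, $\Hom(R/\mathfrak{p},M)_\mathfrak{p}\to\Hom(R/\mathfrak{p},D^2M)_\mathfrak{p}$ is a bijection of finite-dimensional spaces. Purity then yields that $\Hom(R/\mathfrak{p},C)$ has no $\mathfrak{p}$-torsion-free part. An induction on $\Ass C\subseteq \Ass D^2_R(M)=\Ass M$ shows $\Hom(R/\mathfrak{p},C)=0$ for all such $\mathfrak{p}$, so $C=0$.
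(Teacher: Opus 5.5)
There is a genuine gap in the converse, and it cannot be closed, because the converse is false as stated. Your argument breaks at the claim $\Ass_RD_R^2(M)=\Ass_RM$. The hypothesis only constrains primes already in $\Ass_RM$, so nothing prevents $\Coker\phi_R(M)$ from having an associated prime outside $\Ass_RM$.

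A counterexample: take $R=\Bbbk[[x,y]]$, $\mathfrak{q}_n=(x-y^n)R$ for $n\geq 1$, and $M=\bigoplus_{n\geq 1} R/\mathfrak{q}_n$.
\begin{itemize}
\item Then $\Ass_RM=\{\mathfrak{q}_n\}_n$ and $0:_M\mathfrak{q}_n\cong R/\mathfrak{q}_n$, so $\mu(\mathfrak{q}_n,M)=1$.
\item Put $E_m=E_{R/\mathfrak{q}_m}$, so $D_R(M)=\prod_m E_m$. For $m\neq n$ the element $x-y^n$ acts on the divisible $R/\mathfrak{q}_m$-module $E_m$ as the nonzero element $y^m-y^n$. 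Hence $R/\mathfrak{q}_n\otimes_R D_R(M)\cong E_n$.
\item Your own adjunction formula then gives $\Hom_R(R/\mathfrak{q}_n,D_R^2(M))\cong D_R(E_n)\cong R/\mathfrak{q}_n$, so $\mu(\mathfrak{q}_n,D_R^2(M))=1$ as well.
\item Yet $M$ is an infinite direct sum, hence not Matlis reflexive (Remark~\ref{mat-9}). Concretely, $D_R^2(M)$ maps onto $\prod_m R/\mathfrak{q}_m$, where $(1,1,\dots)$ has annihilator $\bigcap_m\mathfrak{q}_m=0$. So $0\in\Ass_RD_R^2(M)\setminus\Ass_RM$.
\end{itemize}
The paper's proof has the same hole. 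It deduces $\mathfrak{p}\in\Ass_RM$ for $\mathfrak{p}\in\Ass_R\Coker\phi_R(M)$ by invoking the hypothesis, which is only available once $\mathfrak{p}\in\Ass_RM$ is known.

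What your skeleton (and the paper's) does prove is the criterion with $\Ass_RD_R^2(M)$ in place of $\Ass_RM$. Purity makes $\Hom_R(R/\mathfrak{p},\cdot)$ exact on $0\to M\to D_R^2(M)\to\Coker\phi_R(M)\to 0$, which gives $\Ass_R\Coker\phi_R(M)\subseteq\Ass_RD_R^2(M)$. At such $\mathfrak{p}$, the first part turns the localized injection into a bijection of finite-dimensional $k(\mathfrak{p})$-spaces, so $\mathfrak{p}\notin\Ass_R\Coker\phi_R(M)$. No essentiality argument or induction is needed.

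The first part is not proved either. The decisive step is exactly the one you flag as the obstacle. The claims meant to carry it are not established over a general domain:
\begin{itemize}
\item that the torsion-free quotient of a pure-injective module is pure-injective and contains a completed free module of full rank;
\item that the $\mathfrak{p}$-socle of $D_R^2(M)$ is some $\widehat F$ with $M$ landing in $F$.
\end{itemize}
Moreover, $\operatorname{rank}_A\hat A$ can be finite and $>1$ when $A\neq\hat A$ (cf.\ Remark~\ref{mat-16}). Your bound ``rank at least $r\cdot\dim_{k(\mathfrak{p})}\hat A\otimes_A k(\mathfrak{p})$'' is also circular, since by your own formula the rank of $D_A(N)$ is exactly $r$.

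The paper's idea is to work on the side of $M$ with a free submodule. Let $A=R/\mathfrak{p}$, with quotient field $Q$, and $N'=0:_M\mathfrak{p}$. Then $D_A^2(N')\cong\Hom_R(A,D_R^2(M))$ by Lemma~\ref{mat-10}. Choose $F=A^{(\Lambda)}\subseteq N'$ with $N'/F$ torsion, so $\operatorname{card}\Lambda=\mu(\mathfrak{p},M)$. Since $E_A^{(\Lambda)}$ is a direct summand of $D_A(F)=E_A^{\Lambda}$, the module $D_A^2(F)\subseteq D_A^2(N')$ contains $\hat A^{\Lambda}\supseteq A^{\Lambda}$. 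Hence $\operatorname{rank}A^{\Lambda}\leq\mu(\mathfrak{p},D_R^2(M))=\operatorname{card}\Lambda$. This forces $\Lambda$ to be finite, since for infinite $\Lambda$ the rank of $A^{\Lambda}$ exceeds $\operatorname{card}\Lambda$. Then $\operatorname{rank}\hat A=1$, i.e.\ $(\hat A/A)\otimes_A Q=0$. Since $\hat A/A$ is flat (Lemma~\ref{mat-4}), hence torsion-free, it follows that $A=\hat A$.
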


The previous numerical criterion of Matlis reflexivity is an improvement of Z\"oschinger's 
result (see \cite{Zh1}) and his arguments. Another feature of  Matlis duality is the study of the dual 
of an injective module, in particular $D_R(E_R(R/\mathfrak{p}))$ for the injective hull $E_R(R/\mathfrak{p})$ 
of $R/\mathfrak{p}$ of prime ideals $\mathfrak{p}$. This was initiated by Enochs (see \cite{Ee} and \cite{EJ}). In particular, 
he has shown 
$$
\Hom_R(E_R(R/\mathfrak{p}), E_R) \cong \widehat{R_{\mathfrak{p}}^{(X_{\mathfrak{p}})}},
$$ 
the completion of a free $R_{\mathfrak{p}}$-module of cardinality $\tau_{\mathfrak{p}} := 
\operatorname{card} X_{\mathfrak{p}}$, the Enochs number of $\mathfrak{p} \in \Spec R$. 
In the case of dimension one a discussion of possible values of $\tau_{\mathfrak{p}}$ is given 
in \cite{Sp15} (see also \ref{mat-16}). Of a particular interest is the case $\tau_{\mathfrak{p}} =1$ 
 investigated in \cite{Sp16}. Note that $\tau_{\mathfrak{m}} = 1$ is always true. For an $R$-module 
 $M$ and a set $X$ recall that $M^{(X)}$ denotes the direct sum of copies of $M$ indexed by $X$ while 
 $M^X$ is the direct product of copies of $M$ indexed by $X$. 

\begin{theorem} \label{in-3}
		Let $\mathfrak{p}\not= \mathfrak{m}$ denote a prime ideal of a local Noetherian ring $(R,\mathfrak{m})$.
	Then $\tau_{\mathfrak{p}} =1$ if and only if $\dim R/\mathfrak{p} = 1$ and $R/\mathfrak{p}$ 
	is complete. 
\end{theorem}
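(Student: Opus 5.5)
We work with the Enochs description $D_R(E_R(R/\mathfrak{p})) \cong \widehat{R_{\mathfrak{p}}^{(X_{\mathfrak{p}})}}$ (completion with respect to $\mathfrak{p}R_{\mathfrak{p}}$). From it we extract a numerical invariant: $\tau_{\mathfrak{p}}$ is the $k(\mathfrak{p})$-dimension of the "top" of this dual. Concretely, we reduce modulo $\mathfrak{p}$ and localize. Using adjunction,
$$
D_R(E_R(R/\mathfrak{p})) \otimes_R R/\mathfrak{p} \cong D_R(\Hom_R(R/\mathfrak{p},E_R(R/\mathfrak{p}))) = D_R(\Hom_R(R/\mathfrak{p},E_R(R/\mathfrak{p}))),
$$
where $\Hom_R(R/\mathfrak{p},E_R(R/\mathfrak{p}))\cong k(\mathfrak{p})$ (the socle at $\mathfrak{p}$, $\mu(\mathfrak p, E_R(R/\mathfrak p))=1$). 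The tensor–Hom swap is justified because $E_R$ is injective and $R/\mathfrak p$ is finitely presented. Since $\mathfrak{p}$ is finitely generated, the completion of a free module satisfies $\widehat{F}/\mathfrak{p}\widehat{F} \cong F/\mathfrak{p}F$. Hence
$$
D_R(k(\mathfrak{p})) \cong k(\mathfrak{p})^{(X_{\mathfrak{p}})},
$$
and $\tau_{\mathfrak{p}} = \dim_{k(\mathfrak{p})} D_R(k(\mathfrak{p}))$. Writing $S=R/\mathfrak{p}$ with fraction field $K=k(\mathfrak{p})$, this is $\Hom_S(K,E_S)$ with $E_S = \Hom_R(S,E_R)$. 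So the theorem reduces to a statement about the domain $S$: \emph{$\dim_K \Hom_S(K,E_S)=1$ iff $\dim S=1$ and $S$ is complete}.

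For the direction "$\Leftarrow$", let $S$ be a complete one-dimensional local domain. Then $K = S_x$ for any $0\ne x\in \mathfrak{m}S$, and $\Hom_S(S_x,E_S)$ is computed as $\varprojlim(\cdots \xrightarrow{x} E_S \xrightarrow{x} E_S)$. By Matlis duality over the complete ring $S$, this is $D_S$ of $\varinjlim (S\xrightarrow{x} S\to\cdots)=S_x=K$. One then checks it equals $\widehat{S}\otimes_S K$-type data. Concretely, use $0\to S\to K\to K/S\to 0$, where $K/S\cong H^1_{\mathfrak m}(S)$ is Artinian. Applying $D_S$ gives
$$
0\to D_S(K/S)\to D_S(K)\to D_S(S)=E_S .
$$
Here $D_S(K/S)$ is finitely generated of rank $1$ (it is the canonical module, of rank one since $S$ is a domain of dimension one). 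Since $D_S(K)$ is a $K$-vector space containing a rank one torsion-free module with torsion cokernel inside $E_S$, we get $\dim_K D_S(K)=1$.

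For "$\Rightarrow$", we argue in two steps. First, if $S$ is not complete, then $\widehat S$ is a $K$-module-ish extension. Tensoring, $\Hom_S(K,E_S)=\Hom_{\widehat S}(\widehat S\otimes_S K, E_{\widehat S})$, and $\widehat S\otimes_S K$ is the localization of $\widehat S$ at the nonzero elements of $S$. Its minimal primes are the minimal primes of $\widehat S$, and $\widehat S\otimes K$ has $\widehat S$-rank larger than one as a $K$-space whenever $\widehat S\ne S$; here we use that $\widehat S/S$ is then a nonzero divisible torsion-free piece, or that $\dim_K(\widehat S\otimes K)>1$ by faithful flatness. This yields $\tau>1$. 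Second, if $S$ is complete with $\dim S\ge 2$, choose a height-one prime $\mathfrak q$ of $S$. Then $D_S(K)$ surjects onto (or contains) duals relating to $S_{\mathfrak q}$, and the dimension-one result \cite{Sp15} (cf.\ \ref{mat-16}) forces $\tau\ge$ the uncountable number of such primes. The plan is to use $\Hom_S(K,E_S)\supseteq\Hom_S(K/S_{\mathfrak q}\text{-pieces})$ to produce infinitely many independent elements.

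The main obstacle is this last step, showing $\tau_{\mathfrak p}>1$ when $\dim R/\mathfrak{p}\ge 2$. I would first try induction, using a prime $\mathfrak q\supset\mathfrak p$ with $\dim R/\mathfrak q=1$ and a surjection relating $D_R(E_R(R/\mathfrak p))$ to $D_R(E_R(R/\mathfrak q))$ localized. Failing that, I would use the local cohomology $H^d_{\mathfrak m}(S)$ and its non-finiteness over $K$.
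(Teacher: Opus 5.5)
Your reduction to $\tau_{\mathfrak p}=\dim_K\Hom_S(K,E_S)$ for $S=R/\mathfrak p$, $K=k(\mathfrak p)$, is correct; it is the same reduction the paper makes at the start of Section~4. Your proof of ``$\Leftarrow$'' is also valid. Tensoring $0\to D_S(K/S)\to D_S(K)\to E_S\to 0$ with $K$ kills the torsion module $E_S$, and $D_S(K/S)=D_S(H^1_{\mathfrak m}(S))\cong\omega_S$ has rank one. The paper argues differently: $K$ is an extension of the Artinian module $H^1_{\mathfrak m}(S)$ by the complete ring $S$, hence Matlis reflexive, which forces $\tau_{\mathfrak p}=1$; this avoids local duality.

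The direction ``$\Rightarrow$'', however, is not proved. In Step~1 you correctly get $\dim_K(\widehat S\otimes_S K)\ge 2$ when $S\ne\widehat S$, since $\widehat S/S$ is flat, hence torsion-free, and nonzero. But $\tau_{\mathfrak p}$ is the $K$-dimension of $\Hom_{\widehat S}(\widehat S\otimes_S K,E_S)$, not of $\widehat S\otimes_S K$, and nothing in your sketch relates the two. Step~2 ($S$ complete, $\dim S\ge 2$) is only a plan, as you acknowledge. Moreover, \cite{Sp15} concerns one-dimensional rings and gives no mechanism for bounding $\tau_{\mathfrak p}$ from below by height-one primes of $S$.

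The missing idea, which handles both steps at once, is that $\tau_{\mathfrak p}=1$ says precisely that $K$ is Matlis reflexive over $S$. If $D_S(K)\cong K$, then $D_S^2(K)\cong K$, and the injective $S$-linear map $K\to D_S^2(K)$ is automatically $K$-linear between one-dimensional $K$-vector spaces, hence bijective.
\begin{itemize}
\item By Recalls~\ref{mat-1}~(B) and (D), the submodule $S\subseteq K$ is reflexive, so $S$ is complete.
\item By Recalls~\ref{mat-1}~(B), the quotient $K/S$ is reflexive over the complete ring $S$. So by \ref{mat-1}~(C) it has a finitely generated submodule $N$ with $(K/S)/N$ Artinian.
\item If $\dim S\ge 2$, pick a prime $0\ne\mathfrak q\ne\mathfrak m$. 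Localizing at $\mathfrak q$ kills the Artinian quotient, so $(K/S)_{\mathfrak q}\cong K/S_{\mathfrak q}$ is finitely generated over $S_{\mathfrak q}$, and hence so is $K$.
\item This contradicts Nakayama's lemma, because $\mathfrak q K=K\ne 0$.
\end{itemize}
This is exactly the paper's argument.
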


The previous result provides a certain converse to the main statement of \cite{Sp16}.
 In an example we discuss some properties of the injective 
$R$-module $E_R[|T|]$ following \cite[Remark 5.3]{WFT}.
 As a basic reference 
we use Matsumura's textbook \cite{Mh}.

\section{On Matlis Reflexivity and Completion}

Let $(R,\mathfrak{m})$ denote a local Noetherian ring with $E =E_R = E_R(\Bbbk), \Bbbk = R/\mathfrak{m}$, 
the injective hull of the residue field. We denote by $D(\cdot) = D_R(\cdot) = \Hom_R(\cdot,E)$ 
the Matlis duality functor. In the following let $M$ be an arbitrary $R$-module. Let $I \subset R$ 
denote a proper ideal. Before we shall investigate 
Matlis reflexivity we need a result about completion. For an $R$-module $M$ let $X_I(M) = \cap_{n \geq 1} 
I^n M = \varprojlim I^n M$, where the inverse system is given by the inclusions. 
We denote by $\hat{\cdot}^I = \varprojlim(R/I^n \otimes_R \cdot)$ the functor of completion and put 
$X(M) = X_{\mathfrak{m}}(M).$ We recall a well-known fact.

\begin{lemma}\label{mat-4}
	Let $(R,\mathfrak{m})$ denote a local Noetherian ring. 
	Because  the homomorphism $M \to \hat{R}^I \otimes_R M$ is injective for any $R$-module $M$
	it follows that $\hat{R}^I/R$ is $R$-flat
\end{lemma}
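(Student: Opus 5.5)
We must prove that if $M \to \hat{R}^I \otimes_R M$ is injective for every $R$-module $M$, then $\hat{R}^I/R$ is $R$-flat. The plan is to use the Tor criterion for flatness together with the long exact Tor sequence of
$$
0 \to R \to \hat{R}^I \to \hat{R}^I/R \to 0 .
$$
Two facts are used and are standard: $\hat{R}^I$ is a flat $R$-module, since $R$ is Noetherian, and the map $R \to \hat{R}^I$ is injective, by Krull's intersection theorem, since $I \subseteq \mathfrak{m}$ and $R$ is local.

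Fix an arbitrary $R$-module $M$ and tensor the sequence above with $M$. This gives the exact sequence
$$
\Tor_1^R(R,M) \to \Tor_1^R(\hat{R}^I,M) \to \Tor_1^R(\hat{R}^I/R,M) \to M \to \hat{R}^I \otimes_R M \to (\hat{R}^I/R)\otimes_R M \to 0 .
$$
Here $\Tor_1^R(\hat{R}^I,M) = 0$ because $\hat{R}^I$ is flat. Hence $\Tor_1^R(\hat{R}^I/R,M)$ is isomorphic to the kernel of $M \to \hat{R}^I \otimes_R M$, which vanishes by hypothesis. Since $M$ was arbitrary, $\Tor_1^R(\hat{R}^I/R,\cdot) = 0$, so $\hat{R}^I/R$ is flat.

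The only point requiring care is the injectivity of $M \to \hat{R}^I\otimes_R M$ for arbitrary, not necessarily finitely generated, $M$; the argument above takes it as the hypothesis of the lemma. If one also wants to justify it, the route is as follows. First reduce to finitely generated $M$: a direct limit of injective maps is injective, and both $M$ and $\hat{R}^I\otimes_R M$ commute with direct limits. For finitely generated $M$ we have $\hat{R}^I\otimes_R M \cong \hat{M}^I$, and the kernel of $M \to \hat{M}^I$ is $\cap_n I^nM$. By Krull's intersection theorem this kernel consists of the elements annihilated by some $1+a$ with $a\in I$. Since $R$ is local and $I\subseteq\mathfrak{m}$, every such $1+a$ is a unit, so the kernel is $0$.
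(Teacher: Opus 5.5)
Your argument is correct and is exactly the reasoning the paper's one-line statement encodes; the paper calls this a well-known fact and gives no separate proof. In the Tor sequence of $0 \to R \to \hat{R}^I \to \hat{R}^I/R \to 0$, flatness of $\hat{R}^I$ identifies $\Tor_1^R(\hat{R}^I/R,M)$ with the kernel of $M \to \hat{R}^I \otimes_R M$. Your justification of that injectivity via direct limits and Krull's intersection theorem is also sound; a shorter route is that $\hat{R}^I$ is faithfully flat over $R$, since $\mathfrak{m}\hat{R}^I \neq \hat{R}^I$ when $I \subseteq \mathfrak{m}$.
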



Related to the previous result there is the following problem. 

\begin{problem} \label{mat-5}
	Let $(R,\mathfrak{m})$ denote a local Noetherian ring. Since $\hat{R}^I$  is $R$-flat the Matlis dual 
	$\Hom_R(\hat{R}^I,E )$ is an injective $R$-module. By applying the duality to the 
	short exact sequence $0 \to R \to \hat{R}^I \to \hat{R}^I/R \to 0$ yields a splitting 
	\[
	\Hom_R(\hat{R}^I,E) \cong E \oplus \Hom_R(\hat{R}^I/R, E)
	\]
	of injective $R$-modules. Because of $\hat{R}^I/\mathfrak{m} \hat{R}^I \cong \Bbbk$ and adjointness 
	we have  $$\dim_{\Bbbk} \Hom(\Bbbk,\Hom_R(\hat{R}^I,E))= \dim_{\Bbbk} \Hom(\Bbbk,E) = 1.$$
	What are the Bass numbers of $\Hom_R(\hat{R}^I,E)$ for non-maximal prime ideals? Moreover, note that 
	$\Hom_R(\hat{R}^I,E) \cong E$ if and only if $R$ is $I$-adic complete. 
\end{problem}

\begin{recalls} \label{mat-1}
	(A) The natural $R$-homomorphism $\phi_R(M): M \to D_R^2(M)$ is pure injective. The $R$-module 
	$M$ is called Matlis reflexive if $M \to D_R^2(M)$ is an isomorphism. \\
	(B) Let $N \subset M$ be a submodule. It is easy to see that $M$ is Matlis reflexive 
	if and only if $N$ and $M/N$ are Matlis reflexive. \\
	(C) Suppose that $(R,\mathfrak{m})$ is complete. Then it follows (see \cite{Ee} or \cite{Zt}) 
	that $M$ is Matlis reflexive if and only if there is a finitely generated submodule $N \subset M$ 
	such that $M/N$ is Artinian. Let $(R,\mathfrak{m})$  denote an arbitrary local Noetherian ring and let $M$ 
	be a Matlis reflexive module. By modifying the original argument  it is shown (see \cite[Corollary 7]{BEe}) 
	that $M$ admits a finitely generated $R$-module $N$ such that $M/N$ is an Artinian $R$-module. \\
	(D) For $M$ a finitely generated $R$-module we have 
	\[
	M \to D^2_R(M) = \Hom_R(\Hom_R(M,E),E) \cong M \otimes_R \Hom_R(E,E) \cong 
	M_R \otimes_R \hat{R} \cong \hat{M}, 
	\]
	so that $M$ is Matlis reflexive if and only if it is complete. 
\end{recalls}

As an application of the  Recall \ref{mat-1} (B) we prove  the following elementary observations.  

\begin{theorem} \label{lem-1}
	Let $(R,\mathfrak{m})$ denote a local ring. Let $M$ be an $R$-module. 
	\begin{itemize}
		\item[(a)] If $M$ is Matlis reflexive, then $R/\mathfrak{p}$ is complete for any $\mathfrak{p} \in \Ass_RM$.
		\item[(b)] Let $M$ be finitely generated and $I = \Ann_RM$. Then $R/I$ is complete if and only if $M$ 
		is Matlis reflexive.
		 \item [(c)] Let $M$ be finitely generated. Then $M$ is Matlis reflexive if and only if 
			$R/\mathfrak{p}$ is complete for all minimal $\mathfrak{p} \in \Ass_R M$.
	\end{itemize}
\end{theorem}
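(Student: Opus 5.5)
The whole proof will run through Recalls \ref{mat-1} (B) and (D): submodules and quotients of reflexive modules are reflexive, and a finitely generated module is reflexive exactly when it is complete. A cyclic module $R/J$ is therefore reflexive if and only if $R/J \cong \widehat{R/J} = \hat{R}/J\hat{R}$, that is, if and only if the ring $R/J$ is complete.

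For (a), take $\mathfrak{p} \in \Ass_R M$. Then $R/\mathfrak{p}$ embeds in $M$, so it is reflexive by (B), hence complete by (D).

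For (b), suppose first that $R/I$ is complete, where $I = \Ann_R M$. Since $M$ is a finitely generated $R/I$-module, the $\mathfrak{m}$-adic completion of $M$ over $R$ agrees with its completion over the complete local ring $R/I$, and the latter is $M$ itself. Hence $M \cong \hat{M}$, and $M$ is reflexive by (D).

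Conversely, suppose $M$ is reflexive and choose generators $m_1,\dots,m_n$ of $M$. The map $R \to M^n$, $r \mapsto (rm_1,\dots,rm_n)$, has kernel $I$, so $R/I$ embeds in $M^n$. The module $M^n$ is finitely generated and complete, so it is reflexive by (D), and then $R/I$ is reflexive by (B), hence complete.

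For (c), the forward direction is (a) applied to the minimal primes, all of which lie in $\Ass_R M$. For the converse, assume $R/\mathfrak{p}$ is complete for every minimal $\mathfrak{p}\in\Ass_RM$. By (b) it suffices to show that $R/I$ is complete. The minimal primes of $I$ are exactly the minimal elements of $\Ass_R M$, say $\mathfrak{p}_1,\dots,\mathfrak{p}_r$, and we set $J=\mathfrak{p}_1\cap\dots\cap\mathfrak{p}_r=\sqrt{I}$.

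The first step is to show that $R/J$ is complete. It embeds in $\bigoplus_i R/\mathfrak{p}_i$, which is reflexive by (D) and the hypothesis, so $R/J$ is reflexive by (B) and hence complete.

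The second step is to pass from $R/\sqrt{I}$ to $R/I$, and I expect this nilpotent thickening to be the main obstacle. Choose $k$ with $J^k \subseteq I$ and filter $R/I$ by the submodules $J^{i}(R/I)$. Each factor $J^{i}(R/I)/J^{i+1}(R/I)$ is a finitely generated $R/J$-module. Since $R/J$ is complete, its $R$-adic completion equals itself, as in the first half of (b), so each factor is complete and therefore reflexive by (D).

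Recall (B) states that if $N$ and $M/N$ are reflexive then so is $M$. Applying this inductively along the finite filtration shows that $R/I$ is reflexive, hence complete, and (b) finishes the proof.

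If that extension step turned out to be problematic, the fallback is to apply exactness of completion on finitely generated modules and the five lemma directly to $0\to N\to M\to M/N\to 0$, which gives the same conclusion.
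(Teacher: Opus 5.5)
Your proof is correct. For (a) and the ``only if'' half of (b) it coincides with the paper's argument.

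The routes differ elsewhere. For the ``if'' half of (b), the paper uses the epimorphism $(R/I)^k \to M$ together with Recall (B). You instead use the fact that finitely generated modules over the complete ring $R/I$ are $\mathfrak{m}$-adically complete, and then apply (D).

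The real difference is in (c). The paper takes a prime filtration $0 = M_n \subset \cdots \subset M_0 = M$ with $M_i/M_{i+1} \cong R/\mathfrak{p}_i$ (citing Bourbaki). Each $\mathfrak{p}_i$ contains a minimal prime $\mathfrak{p}$ of $\Ass_R M$, so $R/\mathfrak{p}_i$ is a quotient of the reflexive module $R/\mathfrak{p}$. The paper then concludes by (B) and induction along the filtration, without using (b) at all.

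You instead reduce via (b) to the ring $R/I$ with $I=\Ann_R M$. You show $R/\sqrt{I}$ is complete through the diagonal embedding into $\bigoplus_i R/\mathfrak{p}_i$. You then climb the nilpotent thickening using the $J$-adic filtration of $R/I$, whose factors are finitely generated $R/J$-modules.

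Your route is somewhat longer and relies on completeness of finitely generated modules over a complete local ring. In return it yields a purely ring-theoretic byproduct: completeness of $R/I$ depends only on the minimal primes of $I$. You could also skip the detour through (b) by filtering $M$ itself by the submodules $J^iM$, since those factors are again finitely generated $R/J$-modules.

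The paper's version is shorter and stays entirely within closure properties of reflexive modules under (B), at the cost of citing the prime filtration lemma.
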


\begin{proof}
	Let $\mathfrak{p} \in \Ass_RM$, then there is an injection $0 \to R/\mathfrak{p} \to M$. Then (a) follows by 
	\ref{mat-1} (B) because $R/\mathfrak{p}$ is Matlis reflexive and therefore complete. 
	 For the case of (b) let $M$ be finitely generated with $m_1,\ldots,m_k$ a basis of $M$. Then the  map 
	$R/I \to M^k$ defined by $r +I \mapsto (rm_i)_{i=1}^k$  is injective as easily seen. If $M$ is Matlis reflexive, 
	then $R/I$ is Matlis reflexive as a submodule of $M^k$ and therefore complete. Since $M$ is generated by 
	$k$ elements there is an epimorphism $(R/I)^k \to M \to 0$. If $R/I$ is complete it is Matlis reflexive 
	and so is $M$.  For the proof of (c) the "only if part" follows by (a). Conversely suppose that $R/\mathfrak{p}$ is complete for all minimal $\mathfrak{p} \in \Ass_R M$. Therefore $R/\mathfrak{q}$ is Matlis reflexive for all $\mathfrak{q} \supset \mathfrak{p}$ because of the epimorphism $R/\mathfrak{p} \to R/\mathfrak{q} \to 0$ 
	and a minimal $\mathfrak{p} \in \Ass_R M$.. 
		
	By \cite[IV, \& 4]{Bn1} 
	there is a finite sequence $0 = M_n \subset M_{n-1}  \subset \ldots \subset M_0 = M$ of submodules of $M$ 
	such that $M_i/M_{i+1} \cong R/\mathfrak{p}_i, \mathfrak{p}_i \in \Spec R,$ for $i = 0, \ldots, n-1,$
	with the property 
	\[
	\Ass_RM \subseteq \{\mathfrak{p}_0, \ldots, \mathfrak{p}_{n-1}\} \subseteq \Supp_R M
	\]
	and the minimal elements  of these three sets are the same. That is, any of these $\mathfrak{p}_i, i = 0,\ldots,n-1,$ 
	contains a minimal prime of $\Ass_RM$ and is therefore Matlis reflexive. Because of the short exact sequence 
	\[
	0 \to M_{i+1}\to M_i \to R/\mathfrak{p}_i \to 0, \; i = 0, \ldots, n-1,
	\]
	and descending induction it follows that $M$ is Matlis reflexive.
\end{proof}

The result in \ref{lem-1} (b) is a different proof of \cite[Theorem 9]{BEe} in the case of a local ring. The statement 
does not hold for arbitrary modules. 
In the following we shall investigate Matlis reflexive modules over a non-complete local ring. 
For an $R$-module $M$ let $\hat{M}^I = \varprojlim M/I^n M$ its $I$-adic completion. 

\begin{theorem} \label{mat-2}
	Let $M$ be a Matlis reflexive $R$-module and let $X_I = X_I(M) = \cap_{n \geq 1} I^n M$. 
	Then there is a short exact sequence $0 \to X_I \to M \to \hat{M}^I \to 0$. That is, $M$ is 
	$I$-quasi-complete, moreover $\hat{M}^I$ and $X_I(M)$ are Matlis reflexive over $R$. 
\end{theorem}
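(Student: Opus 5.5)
The plan is to reduce to the structure result in Recall \ref{mat-1} (C): since $M$ is Matlis reflexive, there is a finitely generated submodule $N \subseteq M$ with $M/N$ Artinian. By Recall \ref{mat-1} (B), $N$ and $A := M/N$ are Matlis reflexive as well. For the finitely generated $N$, Recall \ref{mat-1} (D) gives that $N$ is $\mathfrak{m}$-adically complete. Theorem \ref{lem-1} (b) then shows that $R/\Ann_R N$ is complete. Hence $R/(I+\Ann_R N)$-type quotients are complete, and since $I$-adic and $\mathfrak{m}$-adic topologies interact well over a complete base, $N$ is also $I$-adically complete, i.e. $N \cong \hat{N}^I$. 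Concretely, $N$ is a finitely generated module over the complete ring $R/\Ann_R N$, and finitely generated modules over a complete local ring are complete for every ideal adic topology.

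Next I handle the Artinian quotient $A$. For Artinian $A$, the chain $I^nA$ stabilizes, say $I^nA = I^{n_0}A$ for $n\ge n_0$. Therefore $\hat{A}^I \cong A/I^{n_0}A$ and $A \to \hat{A}^I$ is surjective with kernel $X_I(A)=I^{n_0}A$. Moreover, $A/I^{n_0}A$ is Matlis reflexive as a quotient of $A$.

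To pass to $M$, I apply $I$-adic completion to $0\to N\to M\to A\to 0$. Completion is not exact in general, but the Artin--Rees-type input here is that $N$ is finitely generated. I will use the inverse system of short exact sequences
\[
0 \to N/(N\cap I^nM) \to M/I^nM \to A/I^nA \to 0 .
\]
Its first terms form a surjective system, so by Mittag-Leffler we get exactness of the limit, with $\varprojlim A/I^nA = A/I^{n_0}A$. The main obstacle is identifying $\varprojlim N/(N\cap I^nM)$ with a quotient of $N$, so that $M\to\hat M^I$ is onto. The first attempt is as follows. The filtration $N\cap I^nM$ contains $I^nN$, so $N/(N\cap I^nM)$ is a quotient system of $N/I^nN$. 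Since $N$ is finitely generated over the complete ring $R/\Ann N$, the completion of $N$ with respect to any filtration coarser than $I$-adic is a quotient of $\hat{N}^I = N$. To make this rigorous I would argue via Mittag-Leffler on the kernels $K_n=(N\cap I^nM)/I^nN$, giving $\varprojlim^1 K_n=0$ and surjectivity of $N=\varprojlim N/I^nN \to \varprojlim N/(N\cap I^nM)$. If Mittag-Leffler for $K_n$ is problematic, I would instead use that $N$ with the induced topology is linearly compact, since finitely generated modules over complete local rings are linearly compact. The images of a linearly compact module in a limit are then onto. Granting this, a five-lemma/snake diagram chase shows $M \to \hat M^I$ is surjective, and its kernel is by definition $\cap_n I^nM = X_I(M)$.

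Finally, with $0\to X_I\to M\to \hat M^I\to 0$ exact and $M$ Matlis reflexive, Recall \ref{mat-1} (B) immediately gives that both the submodule $X_I(M)$ and the quotient $\hat M^I$ are Matlis reflexive over $R$.
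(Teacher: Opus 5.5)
Your proof is correct, but it takes a different route from the paper. The paper never decomposes $M$. By \ref{mat-1} (B) every $I^nM$ is Matlis reflexive. It then dualizes the presentation $0\to\bigoplus_n D_R(I^nM)\to\bigoplus_n D_R(I^nM)\to\varinjlim D_R(I^nM)\to 0$ with the exact functor $D_R$, which gives an exact sequence $0\to\varprojlim D_R^2(I^nM)\to\prod_n D_R^2(I^nM)\to\prod_n D_R^2(I^nM)\to 0$. Comparing this, via the isomorphisms $I^nM\cong D_R^2(I^nM)$, with the sequence defining $\varprojlim$ and $\varprojlim{}^1$ of $\{I^nM\}$ yields $\varprojlim{}^1 I^nM=0$. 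The six-term sequence attached to $0\to I^nM\to M\to M/I^nM\to 0$ then gives surjectivity of $M\to\hat M^I$.

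That argument uses only exactness of $D_R$ and works for any tower of Matlis reflexive modules. It needs neither the structure theorem \ref{mat-1} (C) over a non-complete ring, which is a nontrivial result from the literature, nor linear compactness. Your route relies on both. In return it exhibits $\hat M^I$ concretely as an extension of $A/I^{n_0}A$ by a quotient of $N$, which is the kind of information used in the proof of Corollary \ref{mat-6}.

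On the write-up, drop the Mittag-Leffler option for $K_n=(N\cap I^nM)/I^nN$. Nothing forces the images of $K_{n+k}\to K_n$ to stabilize, since $N/I^nN$ need not have finite length when $I$ is not $\mathfrak m$-primary. Commit instead to the linear compactness argument, stated precisely:
\begin{itemize}
\item $N$ is finitely generated over the complete local ring $R/\Ann_R N$, hence linearly compact in its $\mathfrak m$-adic topology.
\item Each $N\cap I^nM$ is $\mathfrak m$-adically closed, by the Krull intersection theorem applied to $N/(N\cap I^nM)$.
\item A compatible family $(x_n+N\cap I^nM)_n$ is a nested sequence of closed cosets, so its intersection is nonempty and consists of preimages.
\end{itemize}
With this, the separate claim $N\cong\hat N^I$ is superfluous. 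The final diagram chase also uses only left exactness of $\varprojlim$, so you do not need Mittag-Leffler for the first terms either.
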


\begin{proof}
	By the assumption (see \ref{mat-1} (B)) $I^nM$ as well as $M/I^nM$ 
	are Matlis reflexive for all $n \geq 1$. With the inclusion maps $\{I^nM \}_{n \geq 1}$ forms 
	an inverse system with $X_I(M) = \varprojlim I^nM$. By applying $D_R$ it follows that 
	$\{D_R(I^n M)\}_{n \geq 1}$ forms a direct system with a short exact sequence 
	\[
	0 \to \oplus_{n \geq 1} D(I^nM) \stackrel{\Phi}{\longrightarrow} 
	\oplus_{n \geq 1} D(I^nM) \to \varinjlim D(I^nM) \to 0
	\]
	as follows by the definition of the direct limit. By applying $D(\cdot)$ again it provides a commutative 
	diagram with exact rows 
	\[
		\xymatrix{
		0 \ar[r] & \varprojlim I^nM \ar[r]  \ar[d] & \prod_{n \geq 1} I^nM  \ar[r] \ar[d] & \prod_{n \geq 1} I^nM 
		\ar[r] \ar[d] & \varprojlim{}^1 I^nM  \ar[r] & 0 \\
		0 \ar[r] &  \varprojlim D_R^2(I^nM) \ar[r]  & \prod_{n \geq 1} D_R^2(I^nM) \ar[r]  & \prod_{n \geq 1} D_R^2(I^nM) 
		\ar[r] & 0 & 
	}
	\]
	because of $D_R( \varinjlim D_R(I^nM) ) \cong \varprojlim D_R^2(I^nM)$. Since the 
	vertical maps are isomorphisms it yields the vanishing $\varprojlim{}^1 I^nM = 0$. By the definitions, 
	the inverse system of short exact sequences $0 \to I^nM \to M \to M/I^nM \to 0$ 
	for all $n \geq 1$ implies the exact sequence 
	\[
	0 \to  \varprojlim I^nM  \to M \to  \varprojlim M/I^nM \to  
	\varprojlim{}^1 I^nM \to 0.
	\]
	By the vanishing it follows that $M \to \hat{M}^I$ is onto and therefore $M$ is $I$-quasi-complete. 
	The rest is clear by \ref{mat-1} (B).
\end{proof}

In the following we shall apply the previous results in order to derive some further 
properties of Matlis reflexive modules. In the following we shall prove a slight extension of Belshoff's
results (see \cite[Theorem 2]{Br})

\begin{corollary} \label{mat-3}
	Let $M$ denote a Matlis reflexive $R$-module. 
	\begin{itemize}
		\item[(a)] The natural map $\hat{R}^I \otimes_RM \to \hat{M}^I$ is onto. 
		\item[(b)] The natural map $M \to \hat{R}^I \otimes_RM$ is an $R$-isomorphism. 
		\item[(c)] For two ideals $J \subseteq I$ the natural map $\hat{M}^J \to \hat{M}^I$ is onto.
	\end{itemize}
\end{corollary}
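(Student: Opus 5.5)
Each of (a), (b), (c) follows from Theorem \ref{mat-2}, applied to suitable Matlis reflexive modules, combined with Lemma \ref{mat-4} and Recall \ref{mat-1} (B).

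\textbf{Part (a).} The natural map $M \to \hat{M}^I$ factors as
\[
M \to \hat{R}^I\otimes_R M \to \hat{M}^I ,
\]
where the first map is $m \mapsto 1\otimes m$. By Theorem \ref{mat-2} the composite is onto. Hence the second map $\hat{R}^I\otimes_R M \to \hat{M}^I$ is onto.

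\textbf{Part (b).} The map $M \to \hat{R}^I\otimes_RM$ is injective by Lemma \ref{mat-4}, and its cokernel is $(\hat{R}^I/R)\otimes_R M$. So it suffices to show $(\hat{R}^I/R)\otimes_R M = 0$. By Recall \ref{mat-1} (C), following \cite[Corollary 7]{BEe}, there is a finitely generated submodule $N\subseteq M$ with $M/N$ Artinian. Since $\hat{R}^I/R$ is flat (Lemma \ref{mat-4}), tensoring with it is exact, so it is enough to treat $N$ and $M/N$ separately.

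For the Artinian module $A = M/N$: every element is annihilated by a power of $\mathfrak{m}$, hence by a power of $I$. Since $R/I^n \to \hat{R}^I/I^n\hat{R}^I$ is an isomorphism, we get $\hat{R}^I\otimes_R A \cong A$, i.e. $(\hat{R}^I/R)\otimes_R A = 0$.

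For the finitely generated module $N$: we have $\hat{R}^I\otimes_R N \cong \hat{N}^I$. Moreover $N$ is Matlis reflexive by Recall \ref{mat-1} (B), and $X_I(N)=0$ by Krull's intersection theorem. Theorem \ref{mat-2} then gives $N\cong \hat{N}^I$ via the natural map. Hence $(\hat{R}^I/R)\otimes_R N=0$.

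Exactness of $0\to N \to M \to M/N\to 0$ after tensoring with $\hat{R}^I/R$ now yields $(\hat{R}^I/R)\otimes_RM=0$, which proves (b).

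\textbf{Part (c).} Let $J\subseteq I$. Then $J^n M \subseteq I^nM$, so the surjections $M/J^nM \to M/I^nM$ induce the natural map $\hat{M}^J \to \hat{M}^I$. The composite $M \to \hat{M}^J \to \hat{M}^I$ is the natural map $M\to\hat{M}^I$, which is onto by Theorem \ref{mat-2}. Hence $\hat{M}^J\to\hat{M}^I$ is onto.

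The main obstacle is (b), specifically the vanishing $(\hat{R}^I/R)\otimes_R M=0$. The key input there is the finitely generated–Artinian dévissage available for Matlis reflexive modules over an arbitrary local ring. If one prefers to avoid that dévissage, a fallback is to show directly that $\hat{R}^I\otimes_R M \to \hat{M}^I$ is injective on Matlis reflexive modules, using the flatness from Lemma \ref{mat-4} together with (a) and the kernel $X_I(M)$ from Theorem \ref{mat-2}.
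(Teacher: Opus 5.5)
Your proof is correct. Parts (a) and (c) are essentially the paper's argument: factor the surjection $M\to\hat M^I$ of Theorem \ref{mat-2} through $\hat R^I\otimes_R M$, respectively through $\hat M^J$. The paper phrases (c) as $X_J(M)\subseteq X_I(M)$.

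For (b) you take a different route. The paper handles the finitely generated case exactly as you do for $N$, via Theorem \ref{mat-2} plus Krull's intersection theorem. It then writes $M$ as the directed union of its finitely generated submodules $M_\lambda$. Each $M_\lambda$ is Matlis reflexive by Recall \ref{mat-1} (B), so each $M_\lambda\to\hat R^I\otimes_R M_\lambda$ is an isomorphism, and since $\hat R^I\otimes_R -$ commutes with direct limits, (b) follows. This avoids two things you need: the structure result \cite[Corollary 7]{BEe}, a nontrivial input over a non-complete ring, and your separate computation for the Artinian quotient. Your d\'evissage is valid, and it records the side fact that $\hat R^I\otimes_R A\cong A$ for every Artinian $A$, but it is heavier than necessary. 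Also, your last step only uses right exactness of $(\hat R^I/R)\otimes_R -$, not flatness.

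Drop the suggested fallback, because it aims at a false statement. For Matlis reflexive $M$, the map $\hat R^I\otimes_R M\to\hat M^I$ is \emph{not} injective in general. By (b) it identifies with $M\to\hat M^I$, whose kernel is $X_I(M)$. For example, let $R$ be a complete discrete valuation ring with quotient field $Q$, $M=E_R=Q/R$ and $I=\mathfrak m$. Then $\mathfrak m E_R=E_R$, so $\hat M^{\mathfrak m}=0$ while $\hat R\otimes_R M\cong M\neq 0$.
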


\begin{proof}
	The short exact sequence $0 \to R \to \hat{R}^I \to \hat{R}^I/R \to 0$ and the results 
	of \ref{mat-4} provide a commutative diagram with exact rows
	\[
		\xymatrix{
		& 0 \ar[r]  & M \ar[r] \ar@2{-}[d] &\hat{R}^I\otimes_R M 
		\ar[r] \ar[d]^{\alpha} &\hat{R}^I/R \otimes_RM\ar[r] & 0 \\
		0 \ar[r] &  X_I \ar[r]  & M \ar[r]  & \hat{M}^I
		\ar[r] & 0. & 
	}
	\]
	Then $\alpha: \hat{R}^I \otimes_RM \to \hat{M}^I$ is onto as is easily seen, i.e. (a) holds. 
	In the case $M$ is finitely generated $\alpha$ is an isomorphism and $X_I = 0$ by the Krull 
	Intersection Theorem. By the diagram (b) is true. For the general case 
	$M$ is the direct limit $M = \varinjlim_{\lambda \in \Lambda} M_{\lambda}$ of finitely generated 
	Matlis reflexive submodules $M_{\lambda} \subset M$. Then (b) follows by passing to the direct limit. 
	Since $X_J(M) \subseteq X_I(M)$ the statement in (c) is a consequence of \ref{mat-2}.
\end{proof}

In case $M$ is a finitely generated $R$-module and $I = \mathfrak{m}$ then $M$ is Matlis reflexive 
if and if $M \to \hat{R} \otimes_R M$ is an isomorphism (see  \cite[Theorem 3]{BEe}). In the following we shall discuss 
the property of Matlis duality and completions a bit more. 

\begin{corollary} \label{mat-6}
	Let $(R,\mathfrak{m})$ denote a local Noetherian ring. Let $M$ be a Matlis 
	reflexive $R$-module. Then $\hat{M}^I$ is Matlis reflexive as an $R$-module as well as an $\hat{R}^I$-module. 
\end{corollary}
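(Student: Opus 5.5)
The $R$-part is already contained in Theorem \ref{mat-2}. Since $\hat{M}^I \cong M/X_I(M)$ is a quotient of the Matlis reflexive module $M$, Recall \ref{mat-1} (B) shows it is Matlis reflexive over $R$. The real content is therefore reflexivity over $S := \hat{R}^I$. I would argue structurally, using the finite-plus-Artinian description of reflexive modules, rather than comparing the two duality functors directly.

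\textbf{Step 1.} By Recall \ref{mat-1} (C), applied to the reflexive $R$-module $\hat{M}^I$, choose a finitely generated submodule $N \subseteq \hat{M}^I$ such that $\hat{M}^I/N$ is Artinian. Both $N$ and $\hat{M}^I/N$ are Matlis reflexive over $R$ by Recall \ref{mat-1} (B).

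\textbf{Step 2.} Transport this picture to $S$. The module $\hat{M}^I$ is naturally an $S$-module, being the inverse limit of the $R/I^n$-modules $M/I^nM$. Consider the $S$-submodule $SN$. The quotient $\hat{M}^I/SN$ is a quotient of the Artinian $R$-module $\hat{M}^I/N$. An Artinian $R$-module is $\mathfrak{m}$-torsion, and each element is killed by a power of $\mathfrak{m}$. Since $S/\mathfrak{m}^k S \cong R/\mathfrak{m}^k$ (as $\mathfrak{m}^k \supseteq I^k$), its $S$-submodules coincide with its $R$-submodules. Hence $\hat{M}^I/SN$ is Artinian over $S$, and $SN$ is finitely generated over $S$. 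The local ring $(S,\mathfrak{m}S)$ has residue field $\Bbbk$, so $E_R$ is an $S$-module with $E_S \cong E_R$.

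\textbf{Step 3.} Apply the reflexivity criteria over $S$. An Artinian $S$-module is Matlis reflexive over $S$, because $\Hom_S(A,E)=\Hom_R(A,E)$ for $\mathfrak{m}$-torsion $A$ and $R$-reflexivity transfers. For the finitely generated part, Recall \ref{mat-1} (D) requires $SN$ to be $\mathfrak{m}S$-adically complete. Then Recall \ref{mat-1} (B) over $S$ gives that $\hat{M}^I$ is $S$-reflexive.

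\textbf{Main obstacle.} The difficulty is completeness of $SN$. My first approach is to show that $SN = N$ and that $\mathfrak{m}$-adic completeness over $R$ and over $S$ coincide, using that $N$ is $R$-reflexive and hence $\mathfrak{m}$-complete by Recall \ref{mat-1} (D). For $SN = N$, I would use that the $S$-action is continuous and $N$ is closed: $N$ is reflexive, so by Theorem \ref{mat-2} it satisfies $X(N)=0$ in the appropriate sense and is quasi-complete. Alternatively, I would apply Theorem \ref{lem-1} (b) to get that $R/\Ann_R N$ is complete; then $I$-adic completion does not change it, so $N$ is already an $S$-module.

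If this route fails, the fallback is to prove $D_S(\cdot)\cong D_R(\cdot)$ on $S$-modules of the form $\hat{M}^I$. The key isomorphism is $\Hom_R(S,E)\cong \Hom_R(R,E)$, which is where the splitting in Problem \ref{mat-5} and Corollary \ref{mat-3} (b), $M\cong S\otimes_R M$, would enter. From that isomorphism, $S$-reflexivity follows from $R$-reflexivity.
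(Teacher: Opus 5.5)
Your main line is correct and follows the same plan as the paper's proof. You write $\hat{M}^I$ over $S=\hat{R}^I$ as an extension of an Artinian module by a finitely generated one, then conclude with Recall \ref{mat-1} (B) over $S$. The paper pushes a decomposition $N\subseteq M$ forward to its image $N'\subseteq \hat{M}^I\cong M/X_I(M)$, whereas you apply Recall \ref{mat-1} (C) to $\hat{M}^I$ directly; that difference is immaterial.

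The more substantive point is that the paper asserts without justification that $N'$ is an $\hat{R}^I$-submodule. It then concludes directly from ``finitely generated plus Artinian over $\hat{R}^I$'', which is not enough by itself because $\hat{R}^I$ need not be complete. Your Step 3 and your discussion of the main obstacle supply exactly the missing transfer from $R$ to $S$. They go through once you tidy up three points.

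First, commit to the route through Theorem \ref{lem-1} (b).
\begin{itemize}
\item Since $R/\Ann_R N$ is a complete local ring, it is complete in every ideal-adic topology. So the finitely generated module $N$ is $I$-adically complete.
\item Hence $N\to S\otimes_R N$, $n\mapsto 1\otimes n$, is an isomorphism.
\item The image of the multiplication map $S\otimes_R N\to \hat{M}^I$ is $SN$. It is also the image of the elements $1\otimes n$, which is $N$. So $SN=N$.
\item Then $(\mathfrak{m}S)^k N=\mathfrak{m}^k N$, so $\mathfrak{m}S$-adic completeness of $N$ is just the $\mathfrak{m}$-adic completeness given by Recall \ref{mat-1} (D).
\end{itemize}
Your other suggestion, deducing that $N$ is closed from $X(N)=0$ and quasi-completeness, does not work as stated. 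Those properties concern the filtration $\{\mathfrak{m}^kN\}$ of $N$ itself. They say nothing about the subspace topology $N$ inherits from $\hat{M}^I$.

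Second, the sentence in Step 3 that an Artinian $S$-module is Matlis reflexive over $S$ is false in general; take $E$ when $S$ is not complete. What is true, and what you need, is that the particular Artinian module $A=\hat{M}^I/N$, which is $R$-reflexive, is also $S$-reflexive. Since $A$ is $\mathfrak{m}$-torsion, $D_S(A)=D_R(A)$. Hence $D_S^2(A)=\Hom_S(D_R(A),E)\subseteq \Hom_R(D_R(A),E)=D_R^2(A)$. The composite $A\to D_S^2(A)\subseteq D_R^2(A)$ is the isomorphism $\phi_R(A)$, so $\phi_S(A)$ is bijective.

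Third, discard the fallback. The isomorphism $\Hom_R(S,E)\cong E$ holds only when $R$ is $I$-adically complete. By Problem \ref{mat-5}, $\Hom_R(\hat{R}^I,E)\cong E\oplus \Hom_R(\hat{R}^I/R,E)$, and the second summand is nonzero otherwise.
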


\begin{proof}
	The first claim holds by \ref{mat-2}. 
	By view of \ref{mat-1} (C) there is a finitely generated submodule $N \subseteq M$ such that $M/N$ is Artinian.
	Let $N'$ denote the image of $N$ in $\hat{M}^I \cong M/X_I(M)$. Then $N/X_I(M) \cap N \cong N'$ is a finitely generated $\hat{R}^I$-sub module of $\hat{M}^I = M/X_I(M)$.  Moreover $M/(N,X_I(M))$ 
	is as an image of $M/N$ an Artinian $R$-module. Therefore there is an embedding of 
	$\hat{M}^I/N'$ into a finite direct sum of copies of $E$. Then we have $E = E_{\hat{R}^I}$, see \cite[tag 08Z4]{stacks}. 
	Therefore 
	as an $\hat{R}^I$-module $\hat{M}/N'$ is an Artinian module. 
	So the claim follows by \ref{mat-2} (B). 
\end{proof}

Further results on Matlis reflexivity of completions are summerized in the following.

\begin{corollary} \label{mat-7}
	Let $M$ denote a module over a local Noetherian ring $(R,\mathfrak{m})$. We investigate  the following 
	conditions for ideals $I \subset R$ and an $R$-module $M$:
	\begin{itemize}
		\item[(i)] $M$ is Matlis reflexive.
		\item[(ii)] $\hat{M}^I$ is Matlis reflexive for any ideal $I$.
		\item[(iii)] $\hat{M}^I$ is Matlis reflexive for some ideal $I$.
		\item[(iv)] $M/X_I(M)$ is Matlis reflexive.
	\end{itemize}
	Then there are the folowing results:
	\begin{itemize}
		\item[(a)] We have the implications (i) $\Longrightarrow$ (ii) $\Longrightarrow$ (iii) $\Longrightarrow$(iv).
		\item[(b)] If $M$ is $I$-separated, that is 
		$X_I(M) = 0$, then  $M \cong \hat{M}^I$. 
		\item[(c)] If $M$ is finitely generated and $\hat{M}^I$ is reflexive for some ideal $I$, then $M$ is reflexive. 
	\end{itemize}
\end{corollary}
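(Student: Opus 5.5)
(a) (i)$\Rightarrow$(ii) is Theorem \ref{mat-2}: for every ideal $I$ the completion $\hat{M}^I$ of a reflexive $M$ is the quotient $M/X_I(M)$ of a reflexive module, hence reflexive by Recall \ref{mat-1} (B). (ii)$\Rightarrow$(iii) is trivial.

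For (iii)$\Rightarrow$(iv) I use the natural map $M/X_I(M)\to \hat{M}^I$. It is injective because $X_I(M)$ is exactly its kernel. The submodule clause of \ref{mat-1} (B) then makes $M/X_I(M)$ reflexive once $\hat{M}^I$ is.

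(b) If $X_I(M)=0$, the map $M\to\hat{M}^I$ is injective. For surjectivity I need $\varprojlim{}^1 I^nM=0$, since the four-term sequence
$$0\to \varprojlim I^nM \to M\to \hat{M}^I\to \varprojlim{}^1 I^nM\to 0$$
in the proof of \ref{mat-2} gives exactly this. In general this vanishing is obtained from Matlis reflexivity via \ref{mat-2}, so I read (b) in context: $M$ is reflexive (or $\hat{M}^I$ is, which by (iii)$\Rightarrow$(iv) makes $M=M/X_I(M)$ reflexive). \ref{mat-2} then gives surjectivity, hence $M\cong \hat{M}^I$. The main obstacle is that the hypothesis of (b) alone does not give surjectivity: a separated, non-complete $M$ (for example $R$ non-complete with $I=\mathfrak{m}$) is a counterexample. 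So the proof must route through reflexivity, that is, through $\varprojlim{}^1=0$.

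(c) Let $M$ be finitely generated. By Krull's intersection theorem $X_I(M)=0$, so $M$ embeds into $\hat{M}^I$ as in (iii)$\Rightarrow$(iv). Reflexivity of $\hat{M}^I$ passes to the submodule $M$ by \ref{mat-1} (B).
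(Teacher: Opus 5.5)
Your proof is correct and follows the paper's route. Everything reduces to Theorem \ref{mat-2} together with Recall \ref{mat-1}~(B), with Krull's intersection theorem for (c); for (iii)$\Rightarrow$(iv) you spell out the embedding $M/X_I(M)\hookrightarrow \hat{M}^I$, which the paper's one-line proof leaves implicit. You are also right that (b) is false as literally stated (take $M=R$ non-complete and $I=\mathfrak{m}$), and the paper's appeal to \ref{mat-2} for (b) tacitly relies on the same reflexivity hypothesis you impose.
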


\begin{proof}
	The proofs follows by \ref{mat-2}. For (c) recall the Krull Intersection Theorem. 
\end{proof}

The previous results in   \ref{mat-7}  are related to Belshoff's argument (see \cite{Br}).  It generalizes part of
\cite[Theorem 3]{BEe} to the case of not necessarily finitely generated modules. We continue with a diagram 
relating the double dual of $M$ and $\hat{M}^I$, related to the diagram in the proof of \cite[Theorem 3]{BEe}.

\begin{proposition} \label{mat-8}
		Let $M$ denote a module over a local Noetherian ring $(R,\mathfrak{m})$. For an ideal $I \subset R$ 
		we have the following commutative diagram with exact rows
		 \[
		\xymatrix{
			0 \ar[r] &M  \ar[r]   \ar[d] & \Hom_R(\Hom_R(M,E),E) \ar^{\alpha}[d]  \\
			0 \ar[r] &  \hat{M}^I \ar[r]  &  \Hom_{\hat{R}^I}(\Hom_{\hat{R}^I}(\hat{M}^I , E_{\hat{R}^I}), E_{\hat{R}^I}). 
		}
		\]
\end{proposition}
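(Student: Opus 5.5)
We have to construct the map $\alpha$ and check that the square commutes; the left vertical map is the canonical map $M \to \hat{M}^I$. Exactness of the rows is already known. By Recall \ref{mat-1} (A), $\phi_R(M)$ is injective. Applied to the $\hat{R}^I$-module $\hat{M}^I$ over the local ring $\hat{R}^I$, the same recall shows that $\phi_{\hat{R}^I}(\hat{M}^I)$ is injective. Here we use $E_{\hat{R}^I} = E$ from \cite[tag 08Z4]{stacks}, which was already quoted in the proof of \ref{mat-6}.

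We build $\alpha$ as a composite of two natural maps. First, let $\pi: M \to \hat{M}^I$ be the completion map. It induces restriction
\[
\pi^*: \Hom_{\hat{R}^I}(\hat{M}^I, E) \to \Hom_R(\hat{M}^I,E) \to \Hom_R(M,E), \qquad f \mapsto f\circ\pi .
\]
The first arrow is the forgetful inclusion. It is in fact an equality, because every element of $E$ is annihilated by a power of $\mathfrak{m}$, hence by a power of $I$. So any $R$-linear map into $E$ is automatically compatible with the $I$-adic structure and extends uniquely to $\hat{R}^I$-linearity. Dualizing $\pi^*$ by $\Hom_R(\cdot,E)$ gives
\[
D_R^2(M) \to \Hom_R(\Hom_{\hat{R}^I}(\hat{M}^I,E),E) = \Hom_{\hat{R}^I}(\Hom_{\hat{R}^I}(\hat{M}^I,E),E),
\]
where the equality holds for the same torsion reason. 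This composite is our $\alpha$. Concretely, $\alpha(\psi)(f) = \psi(f\circ \pi)$.

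Commutativity is then a direct evaluation. For $x\in M$ and $f \in \Hom_{\hat{R}^I}(\hat{M}^I,E)$, going right and then down gives
\[
\alpha(\phi_R(M)(x))(f) = \phi_R(M)(x)(f\circ\pi) = f(\pi(x)).
\]
Going down and then right gives $\phi_{\hat{R}^I}(\hat{M}^I)(\pi(x))(f) = f(\pi(x))$. The two agree.

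The main obstacle is to justify the identification of $R$-linear and $\hat{R}^I$-linear maps into $E$. I would argue it directly. Let $g: N \to E$ be $R$-linear with $N$ an $\hat{R}^I$-module, and take $\hat{r}\in\hat{R}^I$ and $n\in N$. Choose $k$ with $I^k g(\hat{r}n)=0$ and $I^k g(n) = 0$; since $g$ is $R$-linear, this gives $g(I^kN')=0$ for $N'=\hat{R}^I n$, a finitely generated $\hat{R}^I$-module. Now write $\hat{r} = r + s$ with $r\in R$ and $s \in I^k\hat{R}^I$. Since $I^k\hat{R}^I n = I^k N'$, we get
\[
g(\hat{r}n) = r g(n) + g(sn) = rg(n).
\]
This equals $\hat{r} g(n)$ for the $\hat{R}^I$-structure on $E$, which also has $s$ acting through $I^k$. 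This shows that $g$ is $\hat{R}^I$-linear, so the two Hom sets coincide.
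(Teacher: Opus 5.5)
Your candidate $\alpha(\psi)=\psi\circ\pi^*$ is the right one, and your evaluation check of commutativity is correct. The gap is the lemma you use to place $\alpha(\psi)$ in $\Hom_{\hat{R}^I}(\Hom_{\hat{R}^I}(\hat{M}^I,E),E)$: it is false that every $R$-linear map from an $\hat{R}^I$-module $N$ to $E$ is $\hat{R}^I$-linear. Take $N=\hat{R}^I$ with $R$ not $I$-adically complete. Since $E$ is a cogenerator, there is a nonzero $R$-linear map $\hat{R}^I\to\hat{R}^I/R\to E$. It kills $1$, so it is not $\hat{R}^I$-linear. Compare the splitting $\Hom_R(\hat{R}^I,E)\cong E\oplus\Hom_R(\hat{R}^I/R,E)$ in Problem \ref{mat-5}.

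The step that breaks is ``$g(I^kN')=0$''. An element of $I^k\hat{R}^I n$ has the form $\sum a_j\hat{s}_j n$ with $a_j\in I^k$, and $g(a_j\hat{s}_jn)=a_jg(\hat{s}_jn)$ vanishes only if $I^k$ kills $g(\hat{s}_jn)$ for arbitrary $\hat{s}_j\in\hat{R}^I$. Knowing this for $g(n)$ and $g(\hat{r}n)$ gives no such uniform bound, since $\hat{R}^In$ need not be finitely generated over $R$. Your argument is valid only when $n$ itself is killed by a power of $I$, that is, for $I$-torsion $N$ such as $E$. But $\Hom_{\hat{R}^I}(\hat{M}^I,E)$ is in general not $I$-torsion. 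Your first claimed equality is harmless, since you never use it; the second is exactly what you need.

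This is more than a missing justification: the map you define can fail to land in the target. Indeed, $\alpha(\psi)$ is $\hat{R}^I$-linear if and only if $\psi(\hat{r}g)=\hat{r}\psi(g)$ for all $g$ in the image of $\pi^*$, with $\hat{R}^I$ acting on $\Hom_R(M,E)$ through $E$. Take $I=\mathfrak{m}$, $R$ not complete, and $M=R^{(\mathbb{N})}$:
\begin{itemize}
\item $\pi^*$ is $\hat{R}$-linear and onto $E^{\mathbb{N}}$; extend maps from $\hat{R}^{(\mathbb{N})}\subseteq\hat{M}$ using that $E$ is $\hat{R}$-injective.
\item $E^{\mathbb{N}}$ contains an element $y$ with $\Ann_{\hat{R}}(y)=\bigcap_n\mathfrak{m}^n\hat{R}=0$.
\item Extend a non-$\hat{R}$-linear $R$-map $\hat{R}y\cong\hat{R}\to E$ to some $\psi\in D_R^2(M)$. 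Then $\psi\circ\pi^*$ is not $\hat{R}$-linear.
\end{itemize}

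The paper's proof reaches the same map via $f\mapsto\pi^*(f)\otimes 1$ followed by adjunction. That first step is only $R$-linear, so following the paper does not close the gap.

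A repair: $E=\Hom_{\hat{R}^I}(\hat{R}^I,E)$ is an $\hat{R}^I$-submodule of $\Hom_R(\hat{R}^I,E)$ and is $\hat{R}^I$-injective, hence a direct summand. Through $\Hom_R(N,E)\cong\Hom_{\hat{R}^I}(N,\Hom_R(\hat{R}^I,E))$, a retraction yields $\rho\colon\Hom_R(N,E)\to\Hom_{\hat{R}^I}(N,E)$ that is the identity on $\hat{R}^I$-linear maps. Set $\alpha(\psi)=\rho(\psi\circ\pi^*)$ with $N=\Hom_{\hat{R}^I}(\hat{M}^I,E)$. Since $\phi_R(M)(x)\circ\pi^*$ is evaluation at $\pi(x)$, which is already $\hat{R}^I$-linear, your computation shows the square commutes. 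The price is that $\alpha$ is no longer canonical.
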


\begin{proof}
	Because of $E_{\hat{R}^I} \cong E_R$ (see e.g.  \cite[tag 08Z4]{stacks}) it follows that 
	$\Hom_{\hat{R}^I}(\hat{R}^I \otimes_R M, E_{\hat{R}^I})\cong \Hom_R(M,E)$ by adjointness. 
	The natural map $ \hat{R}^I \otimes_RM \to \hat{M}^I$ induces a map 
	\[
	\Hom_{\hat{R}^I}(\hat{M}^I, E_{\hat{R}^I}) \to  \Hom_{\hat{R}^I}(\hat{R}^I \otimes_R M, E_{\hat{R}^I} )
	\cong  \Hom_R(M,E).
	\]
	Tensoring by $\hat{R}^I$ it provides a natural homomorphism 
	\[
		\Hom_{\hat{R}^I}(\hat{M}^I, E_{\hat{R}^I}) \to
	\Hom_{\hat{R}^I}(\hat{M}^I, E_{\hat{R}^I})  \otimes_R \hat{R}^I \to  \Hom_R(M,E) \otimes_R \hat{R}^I.
	\]
	By applying the duality functor $D_{\hat{R}^i}(\cdot)$ yields a map 
	\[
		\Hom_{\hat{R}^I}( \Hom_R(M,E) \otimes_R \hat{R}^I,  E_{\hat{R}^I}) \to 
	\Hom_{\hat{R}^I}(\Hom_{\hat{R}^I}(\hat{M}^I, E_{\hat{R}^I}),  E_{\hat{R}^I}) .
	\]
	By adjointness the first of these modules is isomorphic to 
	\[
	\Hom_R(\Hom_R(M,E),\Hom_{\hat{R}^I}(\hat{R}^I, E_{\hat{R}^I})) \cong 
	\Hom_R(\Hom_R(M,E),E)).
	\]
	This fits into the  commutative diagram above. 
\end{proof}

\section{On Matlis Reflexivity and Base Change}
\begin{remark} \label{mat-9}
	Let $V = \Bbbk^{(\Lambda)}$ denote a vector space over a field $\Bbbk$. Then the 
	natural injective map $V \to \Hom_{\Bbbk}( \Hom_{\Bbbk}(V, \Bbbk), \Bbbk)$ is an isomorphism 
	if and only if $\Lambda$ is finite. This is generalized in the following way: 
	No infinite direct sum of nonzero $R$-modules is Matlis reflexive (see \cite[Lemma 6]{BEe}). 
	See also \cite{Kh}, where the result is used in order to show 
	that Matlis reflexive modules form a Krull-Schmidt category.
	Moreover, a free $R$-module is Matlis reflexive if and only if it is of finite rank and $R$ is complete. 
\end{remark}

\begin{proposition} \label{mat-15}
	Let $P$ denote a projective module over a local ring $(R,\mathfrak{m})$. Suppose that 
	$P$ is Matlis reflexive. Then  $R$ is complete and 
	$P$ is a free $R$-module of finite rank. 
\end{proposition}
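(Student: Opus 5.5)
By Kaplansky's theorem, every projective module over a local ring is free, so $P \cong R^{(X)}$ for some index set $X$. The plan is to first reduce to the finite rank case using Remark \ref{mat-9}, and then deduce completeness of $R$ from Recalls \ref{mat-1}.

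Suppose first that $P \neq 0$ and that $X$ is infinite. Then $P$ is an infinite direct sum of nonzero $R$-modules (copies of $R$), and by Remark \ref{mat-9} (i.e. \cite[Lemma 6]{BEe}) it cannot be Matlis reflexive. This contradicts the hypothesis, so $X$ is finite and $P \cong R^n$ is free of finite rank.

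If one prefers not to rely on the cited lemma, a direct argument is available. Choose a countable subset of $X$. The corresponding direct summand $R^{(\mathbb{N})}$ is a submodule of $P$, hence Matlis reflexive by \ref{mat-1}(B). Its quotient $R^{(\mathbb{N})}/\mathfrak{m}R^{(\mathbb{N})} \cong \Bbbk^{(\mathbb{N})}$ is then also Matlis reflexive. But for a $\Bbbk$-vector space $V$ we have $D_R(V) \cong \Hom_{\Bbbk}(V,\Bbbk)$, since $\Hom_R(\Bbbk,E) \cong \Bbbk$. Hence $D_R^2(V)$ is the double vector-space dual of $V$, and this equals $V$ only if $\dim V < \infty$. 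This is exactly the first statement of Remark \ref{mat-9}, and it rules out infinite $X$.

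For completeness of $R$, assume $P \neq 0$, so that $n \geq 1$. Then $R$ is a direct summand, hence a submodule, of $P \cong R^n$, and by \ref{mat-1}(B) the module $R$ is Matlis reflexive. By \ref{mat-1}(D) applied to the finitely generated module $R$, the natural map $R \to D_R^2(R) \cong \hat{R}$ is the completion map. So $R$ is Matlis reflexive if and only if $R \cong \hat{R}$, that is, if and only if $R$ is complete. Alternatively, Theorem \ref{lem-1}(b) with $M = R$ and $\Ann_R R = 0$ gives the same conclusion.

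The only delicate point is the degenerate case $P = 0$. It is trivially reflexive, but says nothing about $R$, so the statement tacitly assumes $P \neq 0$ (or reads "free of rank $0$" with no completeness claim). Otherwise the main step is the exclusion of infinite rank, which rests entirely on Remark \ref{mat-9} or on the vector-space reduction above.
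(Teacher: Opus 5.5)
Your proof is correct, but it takes a genuinely different route from the paper's. You invoke Kaplansky's theorem at the outset, so $P \cong R^{(X)}$ is itself free. You then apply \cite[Lemma 6]{BEe} directly to $P$, or alternatively your reduction to $P/\mathfrak{m}P \cong \Bbbk^{(X)}$, where $D_R$ becomes the vector-space dual. Completeness follows because $R$ is a direct summand of $R^n$, together with \ref{mat-1}(B),(D).

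The paper avoids Kaplansky's theorem in full generality. It embeds $P$ as a direct summand of some free module $F = R^{(\Lambda)}$ and asserts that $F \to D_R^2(F)$ is an isomorphism because the vertical maps in the square are split injections. It then applies \cite[Lemma 6]{BEe} to $F$, reads off $R = \hat{R}$ from $R^{(\Lambda)} = \hat{R}^{(\Lambda)}$, and only at the end uses that finitely generated projectives over a local ring are free. The gain would be needing only the finitely generated case of Kaplansky's theorem.

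However, the key step does not follow as stated. Writing $F = P \oplus P'$, one has $\phi_R(F) = \phi_R(P) \oplus \phi_R(P')$, and reflexivity of $P$ says nothing about $P'$. For example, take $R$ complete, $P = R$ and $F = R^{(\mathbb{N})}$: the square has split injective vertical maps and an isomorphism on top, yet $F$ is not reflexive. Your route amounts to choosing $F = P$, which sidesteps this, so it is the more reliable argument.

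Your remark about $P = 0$ is also well taken. The zero module is projective and reflexive over any local ring, so the completeness conclusion needs $P \neq 0$. The paper uses this tacitly when it passes from $R^{(\Lambda)} = \hat{R}^{(\Lambda)}$ to $R = \hat{R}$.

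One minor simplification: in your direct argument you need not pick a countable subset of $X$. The quotient $P/\mathfrak{m}P \cong \Bbbk^{(X)}$ is already reflexive by \ref{mat-1}(B), and the vector-space argument applies to it directly.
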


\begin{proof}
	First note that $P$ is a direct summand of a free $R$-module $F$ , so that the natural map $P \to F$ is split injective. Because $D_R^2(\cdot)$ commutes 
	with finite direct sums the map $D_R^2(P) \to D_R^2(F)$ is a split injection. Then the commutative 
	diagram 
	\[
	\xymatrix{
		P \ar[r]^-{\cong}  \ar[d] & D_R^2(P)\ar[d]\\
		F  \ar[r]  & D_R^2(F)
	}
	\]
	implies $F\cong D_R^2(F)$ since the vertical maps are split injective. 
	Let $F = R^{(\Lambda)}$ for a certain set $\Lambda$.  Then $\Lambda$ is finite 
	as follows by \cite[Lemma 6]{BEe}. That is $R^{(\Lambda)} = \hat{R}^{(\Lambda)}$ 
	for the finite $\Lambda$ and $R = \hat{R}$. Then the rank of 
	$P$ is also finite. But a finitely generated projective module over a local ring is free. 
\end{proof}

Next we are interested in a certain change of ring result for the Matlis double dual of a module.

\begin{lemma}\label{mat-10}
	Let $(R,\mathfrak{m})$ denote a local Noetherian ring and let $(R,\mathfrak{m}) \to (S,\mathfrak{n})$ 
	be a local homomorphism such that $S$ is a finitely generated $R$-module. Then there are 
	isomorphisms
	\begin{itemize}
		\item[(a)] $D^2_S(\Hom_R(S,M)) \cong \Hom_R(S,D^2_R(M))$ and 
		\item[(b)] $D^2_S(S \otimes_R M) \cong S \otimes_RD^2_R(M)$
	\end{itemize}
for any $R$-module $M$.
\end{lemma}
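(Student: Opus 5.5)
The plan is to identify $E_S$ first and then use Hom--tensor adjunction twice. Since $S$ is a finitely generated $R$-module and $R \to S$ is local, the standard identification is $E_S \cong \Hom_R(S,E_R)$. This holds because $\Hom_R(S,E_R)$ is an injective $S$-module (coinduction preserves injectivity). It is also $\mathfrak{n}$-torsion, because $\mathfrak{n}^k \subseteq \mathfrak{m}S$ for some $k$. Its socle is $\Hom_S(S/\mathfrak{n},\Hom_R(S,E_R)) \cong \Hom_R(S/\mathfrak{n},E_R)$, which is one dimensional over $S/\mathfrak{n}$ by Matlis duality over $R$: $S/\mathfrak{n}$ is a finite extension of $\Bbbk$, and $\Hom_R(S/\mathfrak{n},E_R)$ is the $\Bbbk$-dual of $S/\mathfrak{n}$, which is free of rank one over the field $S/\mathfrak{n}$. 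Hence $D_S(\cdot) \cong \Hom_S(\cdot,\Hom_R(S,E_R)) \cong \Hom_R(\cdot,E_R) = D_R(\cdot)$ for $S$-modules, naturally.

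For (b), adjunction gives
\[
D_S(S\otimes_R M) \cong \Hom_R(M,\Hom_R(S,E_R)) \cong \Hom_R(S, D_R(M)).
\]
Applying $D_S$ again and using $D_S = D_R$ on $S$-modules yields $D^2_S(S\otimes_R M) \cong \Hom_R(\Hom_R(S,D_R(M)),E_R)$. Now I invoke the evaluation isomorphism
\[
S\otimes_R \Hom_R(N,E_R) \cong \Hom_R(\Hom_R(S,N),E_R),
\]
valid for $S$ finitely generated over Noetherian $R$ and $E_R$ injective. It is proved by taking a finite presentation $R^a\to R^b\to S\to 0$ and using the five lemma, with both sides right exact in $S$ because $E_R$ is injective. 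Applied with $N = D_R(M)$, this gives $S\otimes_R D^2_R(M)$.

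For (a), the argument is dual. We have $D_S(\Hom_R(S,M)) = \Hom_R(\Hom_R(S,M),E_R) \cong S\otimes_R D_R(M)$ by the same evaluation isomorphism with $N=M$. Then
\[
D_S(S\otimes_R D_R(M)) \cong \Hom_R(D_R(M),E_R)\cdot\text{-}\Hom_R(S,\cdot) \cong \Hom_R(S, D^2_R(M))
\]
by the adjunction used in (b).

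The main obstacle is the careful identification $E_S\cong \Hom_R(S,E_R)$, including naturality as $S$-modules, together with checking that the isomorphisms are $S$-linear and compatible with the canonical maps $\phi$. I would verify the evaluation map on $S=R$ and extend via the presentation, noting that all maps are natural in $S$.
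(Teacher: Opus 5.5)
Your proposal is correct and follows essentially the same route as the paper: identify $E_S \cong \Hom_R(S,E_R)$, then combine Hom--tensor adjunction with the evaluation isomorphism $S\otimes_R \Hom_R(N,E_R)\cong \Hom_R(\Hom_R(S,N),E_R)$, which holds because $S$ is finitely presented and $E_R$ is injective. The differences are minor: you prove the identification of $E_S$ where the paper cites it, and in part (a) you make explicit the step $D_R(\Hom_R(S,M))\cong S\otimes_R D_R(M)$ that the paper's proof passes over silently; the garbled middle term in your display for (a) should simply read $D_S(S\otimes_R D_R(M)) \cong \Hom_R(S,D_R^2(M))$.
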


\begin{proof}
	First of all note that $E_S \cong \Hom_R(S,E_R)$, see e.g. \cite[10.6.1]{SS}. Then 
	\[
	D_S(\Hom_R(S,M)) \cong \Hom_S(\Hom_R(S,M), \Hom_R(S,E_R)) \cong D_R(\Hom_R(S,M))
	\]
	as follows by adjointness. Furthermore, we have the isomorphisms
	\[
	D_S^2(\Hom_R(S,M)) \cong \Hom_S(D_R(\Hom_R(S,M)), \Hom_R(S,E_R)) \cong 
	\Hom_R(S, D^2_R(M))
	\]
	which proves the claim in (a). For the proof of (b) we get 
	\begin{gather*}
	D_S(S \otimes_RM) \cong  \Hom_S(S\otimes_RM,E_S) \cong \Hom_S(S\otimes_RM,\Hom_R(S,E_R))\cong \\ \Hom_R(S\otimes_RM,E_R)
	\cong \Hom_R(S,\Hom_R(M,E_R)).
	\end{gather*}
	Moreover, there are isomorphisms 
	\begin{gather*}
	D_S^2(S\otimes_RM) \cong \Hom_S( \Hom_R(S,\Hom_R(M,E_R)), \Hom_R(S,E_R )) \cong \\
	\Hom_R(\Hom_R(S,\Hom_R(\Hom_R(M,E_R),E_R)).
	\end{gather*}
	Since $S$ is finitely generated as $R$-module and $E_R$ is injective the last module in the above 
	sequence of isomorphisms is isomorphic to $S\otimes_RD_R^2(M)$, so the proof of (b) is complete. 
\end{proof}

Now let as above $\phi_R(M) : M \to D^2_R(M)$ denote the natural injection and $\Coker \phi_R(M)$ the 
cokernel of the embedding. Then we have a slight generalization of one of Z\"oschinger's results 
(see \cite[Lemma 1.4]{Zh1}). 

\begin{corollary} \label{mat-11}
	With the notation of \ref{mat-10} we have the following isomorphisms 
	\begin{itemize}
		\item[(a)] $\Coker \phi_S(\Hom_R(S,M)) \cong \Hom_R(S,\Coker \phi_R(M))$ and 
		\item[(b)]$\Coker \phi_S(S\otimes_RM) \cong S \otimes_R \Coker \phi_R(M)$
	\end{itemize}
	for any $R$-module $M$.
\end{corollary}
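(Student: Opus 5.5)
The plan is to compare the short exact sequences defining the two cokernels and then apply Lemma \ref{mat-10}. Start with the exact sequence
\[
0 \to M \xrightarrow{\phi_R(M)} D_R^2(M) \to \Coker \phi_R(M) \to 0 .
\]
Since $\phi_R(M)$ is pure injective by Recall \ref{mat-1} (A), this sequence is pure exact. Tensoring with the finitely generated $R$-module $S$ therefore keeps it exact:
\[
0 \to S\otimes_R M \to S\otimes_R D_R^2(M) \to S\otimes_R \Coker \phi_R(M) \to 0 .
\]

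For part (b), the key point is naturality. I would check that under the isomorphism $D_S^2(S\otimes_R M)\cong S\otimes_R D_R^2(M)$ of Lemma \ref{mat-10} (b), the map $\phi_S(S\otimes_R M)$ is identified with $S\otimes_R \phi_R(M)$. Concretely, I would trace an element $s\otimes m$ through the chain of adjunction isomorphisms in that proof. The composite sends it to the functional $g\mapsto g(s\otimes m)$, which is exactly $\phi_S(s\otimes m)$. Once this square commutes, the two cokernels are isomorphic.

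For part (a), I would apply $\Hom_R(S,\cdot)$ to the same sequence. This needs exactness on the right, i.e. surjectivity of
\[
\Hom_R(S,D_R^2(M))\to \Hom_R(S,\Coker\phi_R(M)).
\]
Here I would use the standard fact that a pure exact sequence remains exact under $\Hom_R(P,\cdot)$ for $P$ finitely presented; $S$ is finitely presented since $R$ is Noetherian. The remaining step is to check that Lemma \ref{mat-10} (a) identifies $\phi_S(\Hom_R(S,M))$ with $\Hom_R(S,\phi_R(M))$, again by chasing a homomorphism $f\colon S\to M$ through the adjunction isomorphisms.

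The main obstacle is these naturality checks, i.e. showing that the abstract isomorphisms of Lemma \ref{mat-10} are compatible with the canonical maps $\phi$. In (b) there is also the last identification $\Hom_R(\Hom_R(S,D_R(M)),E_R)\cong S\otimes_R D_R^2(M)$, which requires $S$ finitely presented and $E_R$ injective. I would verify compatibility of the natural evaluation map with this isomorphism first for $S=R^n$ and then pass to a presentation $R^m\to R^n\to S\to 0$ using the five lemma.
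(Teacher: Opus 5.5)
Your proposal is correct and follows the paper's proof. Both apply $S\otimes_R\cdot$ and $\Hom_R(S,\cdot)$ to the pure exact sequence $0\to M\to D_R^2(M)\to\Coker\phi_R(M)\to 0$ and compare the result with the sequence defining $\Coker\phi_S$ via Lemma \ref{mat-10}; the naturality checks you single out (that these isomorphisms carry $\phi_S$ to $S\otimes_R\phi_R(M)$, resp.\ $\Hom_R(S,\phi_R(M))$) are what the paper uses implicitly when it asserts its diagram commutes, so you simply make that step explicit.
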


\begin{proof}
	For the proof use the short exact sequence $0 \to M \to D_R^2(M) \to \Coker \phi_R(M) \to 0$, recall that 
	$  M \to D_R^2(M)$ is pure injective and apply $\Hom_R(S,\cdot)$. 
	Then there is a commutative diagram with exact rows
	\[
	\xymatrix{
	0 \ar[r] & \Hom_R(S,M) \ar[r]  \ar[d] &  \Hom_R(S,D_R^2(M))  \ar[r] \ar[d] & 
	 \Hom_R(S,\Coker \phi_R (M)) 
	 \ar[d]  \ar[r] &0 \\
	0 \ar[r] &  \Hom_R(S,M) \ar[r]  &D^2_S( \Hom_R(S,M) ) \ar[r]  & 
	\Coker \phi_S( \Hom_R(S,M) )\ar[r] & 0 
	}
	\]
	The two vertical maps at the left are isomorphisms (see \ref{mat-10} for the second). Then
	the first result in (a) follows. 
	The second statement is proved similarly by applying $S\otimes_R \cdot$ to the above short exact 
	sequence. 
\end{proof}

In the following we shall summarize a few notations, needed in the following. For a prime ideal $\mathfrak{p} \in 
\Spec R$ let $k(\mathfrak{p}) = R_{\mathfrak{p}}/\mathfrak{p} R_{\mathfrak{p}}$ the quotient 
field of $R/\mathfrak{p}$. 

\begin{notation} \label{mat-12}
	(A) For an $R$-module $M$ let $\mu(\mathfrak{p},M) = \dim_{k(\mathfrak{p})} \Hom_{R_{\mathfrak{p}}}(k(\mathfrak{p}),M_{\mathfrak{p}})$. 
	Note that $\mu(\mathfrak{p},M) = 
	\dim_{k(\mathfrak{p})} (\Hom_R(R/\mathfrak{p},M))_{\mathfrak{p}}$. If $E(M)$ denotes the injective hull 
	of $M$, then $E(M) \cong \oplus_{\mathfrak{p} \in \Spec R} E_R(R/\mathfrak{p})^{(X(\mathfrak{p},M))}$ 
	with $\operatorname{card} X(\mathfrak{p},M) = \mu(\mathfrak{p},M)$ as follows by Matlis structure 
	theory (see e.g. \cite{EJ} for the details).\\
	(B) For a prime ideal $\mathfrak{p}$ it follows that $k(\mathfrak{p}) \otimes_{R_{\mathfrak{p}}} M_{\mathfrak{p}} 
	\cong (R/\mathfrak{p} \otimes_R M)_{\mathfrak{p}} \cong k(\mathfrak{p})^{(Y(\mathfrak{p},M))}$. We define 
	$\rho(\mathfrak{p},M) = \operatorname{card} Y(\mathfrak{p},M)$, the rank of $M_{\mathfrak{p}}$. In case $R$ 
	is a domain, $\rho(0,M) = \dim_Q M \otimes_R Q$, where $Q$ denotes the quotient field of $R$.
\end{notation}

\begin{theorem} \label{mat-13}
	Let $(R,\mathfrak{m})$ denote a Noetherian local ring. Let $M$ be an $R$-module.  Let $\mathfrak{p}$ denote 
	a prime ideal of $R$. 
	\begin{itemize}
		\item[(a)] If $\mu(\mathfrak{p},M) = \mu( \mathfrak{p},D_R^2(M)) \not= 0$, then these numbers are finite 
		and $R/\mathfrak{p}$ is complete.
		\item[(b)] If $\rho(\mathfrak{p},M) = \rho( \mathfrak{p},D_R^2(M)) \not= 0$,  then these numbers are finite 
		and $R/\mathfrak{p}$ is complete.
	\end{itemize}
\end{theorem}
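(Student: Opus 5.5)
Following Zöschinger's method, the plan is to reduce to the case $S = R/\mathfrak{p}$ by base change (Lemma \ref{mat-10} and Corollary \ref{mat-11}) and then to localize at $\mathfrak{p}$. Put $S = R/\mathfrak{p}$, a finitely generated $R$-module, and write $\phi = \phi_R(M)$ with cokernel $C = \Coker \phi_R(M)$. The sequence $0 \to M \to D_R^2(M) \to C \to 0$ is pure, so both $\Hom_R(S,\cdot)$ and $S \otimes_R \cdot$ preserve its exactness. Since localization is exact, we obtain exact sequences of $k(\mathfrak{p})$-vector spaces
\[
0 \to \Hom_R(S,M)_{\mathfrak{p}} \to \Hom_R(S,D_R^2(M))_{\mathfrak{p}} \to \Hom_R(S,C)_{\mathfrak{p}} \to 0,
\]
and the analogous sequence with $S \otimes_R \cdot$. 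The outer dimensions are $\mu(\mathfrak{p},M)$ and $\mu(\mathfrak{p},D_R^2(M))$ (respectively $\rho(\mathfrak{p},\cdot)$). By Lemma \ref{mat-10}, the middle terms are $D_S^2(\Hom_R(S,M))_{\mathfrak{p}}$ and $D_S^2(S\otimes_R M)_{\mathfrak{p}}$.

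Next, reduce to the domain $S$ and its fraction field $Q = k(\mathfrak{p})$. Let $N = \Hom_R(S,M)$ in case (a) or $N = S \otimes_R M$ in case (b); this is an $S$-module. The hypothesis says that $\dim_Q N \otimes_S Q = \dim_Q D_S^2(N)\otimes_S Q = \lambda \neq 0$. The first step is to show that $\lambda$ is finite. Pick a free $S$-submodule $F = S^{(\Lambda)} \subseteq N$ with $N/F$ torsion, where $\operatorname{card}\Lambda = \lambda$. Applying $D_S^2$, which is exact, gives $D_S^2(F) \subseteq D_S^2(N)$ with torsion-related cokernel. I then want to show that if $\Lambda$ is infinite, $\operatorname{rank} D_S^2(S^{(\Lambda)}) > \operatorname{card}\Lambda$. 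Note that $D_S^2(S^{(\Lambda)}) = \Hom_S(D_S(S)^{\Lambda}, E_S)$, which contains $D_S^2(S)^{(\Lambda)}$ and more. The cleanest route is the vector space picture of Remark \ref{mat-9}: tensor with $Q$ and use the fact that an infinite-dimensional space has strictly larger double dual dimension (Erdős–Kaplansky). Making this precise is where I expect the main obstacle to lie, because one must control $Q \otimes_S D_S^2(N/F)$, and torsion can become non-torsion after the double dual. I would try to avoid this by choosing $\lambda$ via the quotient $N \to N\otimes_S Q$ and comparing with $D_S^2(N \otimes_S Q)$ directly. Here $N\otimes_S Q \cong Q^{(\Lambda)}$, and Matlis duality over $S$ turns the surjection $N \to Q^{(\Lambda)}$ into an injection of double duals. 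So $\dim_Q D_S^2(N)\otimes Q \ge \dim_Q D_S^2(Q^{(\Lambda)})\otimes Q$, which exceeds $\lambda$ for infinite $\Lambda$ (needing only $D_S^2(Q^{(\Lambda)})\otimes Q \supseteq$ something containing $D^2_S(Q)^\Lambda$-type products). Hence $\lambda < \infty$.

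Once $\lambda$ is finite, the exact sequence of vector spaces forces $\Hom_R(S,C)_{\mathfrak{p}} = 0$ (respectively $(S\otimes C)_{\mathfrak{p}} = 0$), and by Corollary \ref{mat-11} this is $\Coker\phi_S(N)\otimes_S Q = 0$. To get completeness of $S$, take a nonzero element $x \in N$ that is non-torsion (it exists since $\lambda \neq 0$) and consider $Sx \cong S$. Purity gives $D_S^2(Sx) \cong \hat{S}$, and the induced map $\hat S/S \to \Coker\phi_S(N)$ is injective because a pure submodule of a pure embedding stays pure. Since $\hat{S}/S$ is $S$-flat by Lemma \ref{mat-4}, it is torsion-free, so $(\hat S/S)\otimes_S Q \hookrightarrow \Coker\phi_S(N)\otimes_S Q = 0$, which forces $\hat{S}/S = 0$. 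Therefore $R/\mathfrak{p}$ is complete.
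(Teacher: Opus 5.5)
There is a genuine gap: you never establish that $\lambda$ is finite, which is the core of the theorem. Your reduction to the domain $S=R/\mathfrak{p}$ via Lemma~\ref{mat-10} and Corollary~\ref{mat-11} is the paper's, and your final completeness step is sound, but that step needs finiteness to get $\Coker\phi_S(N)\otimes_S Q=0$. The route you settle on fails at its first step. The localization map $N\to N\otimes_S Q\cong Q^{(\Lambda)}$ is not surjective: its image is $N$ modulo torsion, and already for $N=S$ it is $S\subsetneq Q$ unless $S$ is a field. So there is no surjection to dualize. Also, $D_S^2$ is a covariant exact functor, so a surjection would give a surjection $D_S^2(N)\twoheadrightarrow D_S^2(Q^{(\Lambda)})$, not an injection. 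Finally, the decisive inequality $\dim_Q D_S^2(Q^{(\Lambda)})\otimes_S Q>\operatorname{card}\Lambda$ is only gestured at in a parenthetical.

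The idea you abandoned is the one that works, and it is the paper's. The obstacle you feared is not there, because you never need $Q\otimes_S D_S^2(N/F)$:
\begin{itemize}
\item Exactness of $D_S^2$ gives $D_S^2(F)\subseteq D_S^2(N)$, and flatness of $Q$ then gives $\dim_Q D_S^2(F)\otimes_S Q\le\lambda=\operatorname{card}\Lambda$.
\item Since $E_S^{(\Lambda)}$ is injective ($S$ Noetherian), it is a direct summand of $E_S^{\Lambda}=D_S(F)$. Dualizing gives $S^{\Lambda}\subseteq\hat S^{\Lambda}=D_S(E_S^{(\Lambda)})\subseteq D_S^2(F)$.
\item It therefore remains to prove $\dim_Q S^{\Lambda}\otimes_S Q>\operatorname{card}\Lambda$ for infinite $\Lambda$. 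This needs an actual argument. For example, take an independent family of $2^{\operatorname{card}\Lambda}$ subsets of $\Lambda$: their characteristic vectors are linearly independent in $S^\Lambda$ (evaluate at a point of $A_s\setminus\bigcup_{t\neq s}A_t$), and this independence survives $\otimes_S Q$ because $S^\Lambda$ is torsion-free.
\end{itemize}
The paper runs through the same chain $R^{(\Lambda)}\subseteq R^{\Lambda}\subseteq\hat R^{\Lambda}\subseteq D_R^2(F)$ and then compares $\Bbbk^{(\Lambda)}$ with $\Bbbk^{\Lambda}$.

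Once finiteness is in hand, your completeness argument via $Sx\cong S$ is valid. It is a somewhat cleaner alternative to the paper's use of $(\hat R^{(\Lambda)}/R^{(\Lambda)})\otimes_R Q=0$ together with $\Tor_1^R(\hat R/R,Q/R)=0$. Two justifications need correcting. The injectivity of $\hat S/S\to\Coker\phi_S(N)$ comes from the snake lemma applied to $0\to Sx\to N\to N/Sx\to 0$, using that $\phi_S(N/Sx)$ is injective; it does not come from a purity argument. And $D_S^2(S)\cong\hat S$ holds by the Recalls and needs no purity.
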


\begin{proof} (a): 
	First of all we show that if $\mu(\mathfrak{p},M) = \mu(\mathfrak{p},D_R^2(M))$ then these numbers are finite.
	To this end we first assume that $R$ is a domain and $\mathfrak{p} = 0$. Then $\mu(0,X) = \dim_Q X\otimes_RQ$, 
	the rank of $X$, where $Q$ denotes the quotient field of $R$. Now we follow an argument 
	by Z\"oschinger (see \cite{Zh2}). Let $F = R^{(\Lambda)} \subseteq M$ denote a free submodule of $M$ 
	with $\operatorname{card} \Lambda = \operatorname{rank} M$. Then $M/F$ is a torsion module and 
	$F\otimes_RQ \cong M\otimes_RQ$. The diagram of injections 
	\[
		\xymatrix{
	     F \ar@{>->}[r]  \ar@{>->}[d] & M \ar@{>->}[d]\\
		D_R^2(F)  \ar@{>->}[r]  & D_R^2(M)
	}
	\]
	provides that $F \otimes_RQ \cong D_R^2(F) \otimes_RQ$ and $\operatorname{rank} F = 
	\operatorname{rank}  D_R^2(F)$.  
	In order to continue we have to show that $\Lambda$ 
	is finite. Since $F = R^{(\Lambda)}$ it induces an embedding $E_R^{(\Lambda)}\rightarrowtail 
	E_R^{\Lambda} = D_R(F)$ 
	that is a split injection because $E_R^{\Lambda}$ is an injective $R$-module. 
	By applying $D_R(\cdot)$ it induces 
	a split injection $ \hat{R}^{\Lambda} \rightarrowtail D_R^2(F)$. Because of the injections 
	$F = R^{(\Lambda )}  \rightarrowtail R^{\Lambda} \rightarrowtail \hat{R}^{\Lambda} \rightarrowtail D^2_R(F)$ 
	it provides 
	\[
	R^{(\Lambda)} \otimes_R Q \cong  \hat{R}^{\Lambda} \otimes_R Q  \;  \mbox{ and } \; 
	\operatorname{rank} R^{(\Lambda)} = \operatorname{rank} R^{\Lambda}
	\]
	recall that $\operatorname{rank} F = \operatorname{rank} D_R^2(F)$. 
	Now it follows that 
	$R/\mathfrak{m} \otimes_RR^{\Lambda} \cong (R/\mathfrak{m})^{\Lambda}$ (see \cite[Chapter 1, § 2, Ex. 9]{Bn1}) 
	and $R/\mathfrak{m} \otimes_RR^{(\Lambda)} \cong (R/\mathfrak{m})^{(\Lambda)}$, that is 
	$\dim \Bbbk^{(\Lambda)} = \dim \Bbbk^{\Lambda}  = \dim \Hom_{\Bbbk}(\Bbbk^{(\Lambda)} , \Bbbk)$. But 
	this implies that $\Lambda$ is finite (see \ref{mat-4}). Since $\Lambda$ is finite by the isomorphism 
	above it follows that $ (\hat{R}^{(\Lambda)}/ R^{(\Lambda)})  \otimes_R Q \cong ((\hat{R}/R)  \otimes_R Q)^{(\Lambda)} = 0$
	and therefore $(\hat{R}/R) \otimes_R Q = 0$.  Now we use the short exact sequence 
	$0 \to R \to Q \to Q/R \to 0$ and tensor it by $\hat{R}/R$. Because $Q$ is $R$-flat and because of $\Tor_1^R(\hat{R}/R, Q/R) = 0$
	(see \ref{mat-4}) it induces an injection $0 \to \hat{R}/R \to (\hat{R}/R) \otimes_R Q$, which shows that $R = \hat{R}$. 
	
	In order to show the general case first note $\mu(\mathfrak{p},M) = \mu(\mathfrak{p}, \Hom_R(R/\mathfrak{p},M))$. 
	Because of \ref{mat-10} (A) we have $D_{R/\mathfrak{p}}^2(\Hom_R(R/\mathfrak{p},M)) \cong \Hom_R(R/\mathfrak{p},D_R^2(M))$. It follows that 
	\[
	\mu(\mathfrak{p}, D_R^2(M)) = \mu(\mathfrak{p}, \Hom_R(R/\mathfrak{p},D_R^2(M))) = 
	\mu(\mathfrak{p}, D_{R/\mathfrak{p}}(\Hom_R(R/\mathfrak{p},M))). 
	\]
	Whence we may consider the module $\Hom_R(R/\mathfrak{p},M)$ over the domain  $R/\mathfrak{p}$. 
	Then the conclusion follows as before. \\
	(b): If $R$ is a domain we conclude as in (a) because of $\mu(0,X) = \rho(0,X)$. In the general case 
	we have that $\rho (\mathfrak{p},M) = \rho(0,M/\mathfrak{p} M)$ as easily seen. Moreover 
	by view of \ref{mat-10} (B) it follows that $\rho(\mathfrak{p}, D_R^2(M)) = \rho(0,D_R^2(M)/\mathfrak{p}D_R^2(M)) = 
	\rho(0,D_{R/\mathfrak{p}}^2(M/\mathfrak{p}M))$. That is, 
	we may consider $M/\mathfrak{p}M$ as a module over the domain $R/\mathfrak{p}$. Then the claim follows as 
	before. 
\end{proof}

As an application we get the following simplification of Z\"oschinger's result (see \cite[Satz 1.1]{Zh2}). In his theorem  
the coincidence of the $\mu$-numbers are required for all $\mathfrak{p} \in \Ass_R \Coker \phi_R(M)$. 
In fact $\Ass_R \Coker \phi_R(M)$ is difficult to describe.

\begin{theorem} \label{mat-14}
	Let $(R,\mathfrak{m})$ denote a local ring and let $M$ be an $R$-module. Then $M$ is Matlis 
	reflexive, i.e. $M \cong D_R^2(M)$, if and only if $\mu(\mathfrak{p},M) = \mu( \mathfrak{p},D_R^2(M))$
	for all $\mathfrak{p} \in \Ass_R M$.  
	If this is the case $R/\mathfrak{p}$ is complete. 
\end{theorem}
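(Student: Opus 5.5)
The forward implication is trivial: if $\phi_R(M)\colon M\to D_R^2(M)$ is an isomorphism, then $\mu(\mathfrak{p},M)=\mu(\mathfrak{p},D_R^2(M))$ for every prime. The final clause then follows from Theorem \ref{mat-13}(a): for $\mathfrak{p}\in\Ass_RM$ we have $\mu(\mathfrak{p},M)\neq 0$, so the common value is finite and $R/\mathfrak{p}$ is complete. (Alternatively use Theorem \ref{lem-1}(a) once reflexivity is known.) So the work is the converse. The plan is to show that $C=\Coker\phi_R(M)$ vanishes, via the Bass numbers of the short exact sequence $0\to M\to D_R^2(M)\to C\to 0$.

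\emph{Step 1 (exactness after $\Hom$ and localization).} Since $\phi_R(M)$ is pure injective (Recalls \ref{mat-1}(A)) and $R/\mathfrak{p}$ is finitely presented, applying $\Hom_R(R/\mathfrak{p},\cdot)$ keeps the sequence exact. This is the argument already used in Corollary \ref{mat-11}. Localizing at $\mathfrak{p}$ gives an exact sequence of $k(\mathfrak{p})$-vector spaces
\[
0\to \Hom_R(R/\mathfrak{p},M)_{\mathfrak{p}}\to \Hom_R(R/\mathfrak{p},D_R^2(M))_{\mathfrak{p}}\to \Hom_R(R/\mathfrak{p},C)_{\mathfrak{p}}\to 0 .
\]
Hence $\mu(\mathfrak{p},D_R^2(M))=\mu(\mathfrak{p},M)+\mu(\mathfrak{p},C)$. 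Whenever $\mu(\mathfrak{p},M)=\mu(\mathfrak{p},D_R^2(M))$ is finite, this forces $\mu(\mathfrak{p},C)=0$. For $\mathfrak{p}\in\Ass_RM$ the common value is nonzero, so Theorem \ref{mat-13}(a) supplies exactly this finiteness. Thus $\mathfrak{p}\notin\Ass_RC$ for all $\mathfrak{p}\in\Ass_RM$.

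\emph{Step 2 (the main obstacle: $\Ass_RC\subseteq\Ass_RM$).} This is precisely the point where the hypothesis is weakened compared with Zöschinger's theorem. Let $\mathfrak{p}\in\Ass_RC$. By Step 1, $\Hom_R(R/\mathfrak{p},D_R^2(M))_{\mathfrak{p}}\neq 0$, so $\mathfrak{p}\in\Ass_RD_R^2(M)$. By Lemma \ref{mat-10}(a), $\Hom_R(R/\mathfrak{p},D_R^2(M))\cong D^2_{R/\mathfrak{p}}(\Hom_R(R/\mathfrak{p},M))$. Since $D^2$ of the zero module is zero, $\Hom_R(R/\mathfrak{p},M)\neq 0$.

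It remains to upgrade this to $\mathfrak{p}\in\Ass_RM$, i.e.\ to show that $\Hom_R(R/\mathfrak{p},M)$ has nonzero rank over the domain $R/\mathfrak{p}$. I would argue by contradiction. If this rank were zero, then $N=\Hom_R(R/\mathfrak{p},M)$ is a torsion $R/\mathfrak{p}$-module, while $D^2_{R/\mathfrak{p}}(N)$ has positive rank. I would try to rule this out by showing that the double dual of a torsion module over the domain $R/\mathfrak{p}$ is again torsion. The idea is to write $N=\varinjlim N_\lambda$ over its finitely generated submodules, each killed by a nonzero element. Then $D(N)=\varprojlim D(N_\lambda)$, and $D^2(N)$ is a quotient of $\bigoplus D^2(D(N_\lambda))$-type pieces, which suggests it is again torsion. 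This is the delicate step, because duals do not commute with the limits involved. If it resists, the fallback is to choose $\mathfrak{p}\in\Ass_RC$ maximal and exploit the chain of inclusions into a prime $\mathfrak{p}'\in\Ass_RM$ with $\mathfrak{p}'\supseteq\mathfrak{p}$.

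\emph{Step 3 (conclusion).} Steps 1 and 2 together give $\Ass_RC=\emptyset$, so $C=0$ and $M\cong D_R^2(M)$. The completeness of $R/\mathfrak{p}$ for $\mathfrak{p}\in\Ass_RM$ then follows as explained at the start.
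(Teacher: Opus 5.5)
Your forward direction, the final clause, and Step~1 are correct. The gap is Step~2, and it cannot be closed. The inclusion $\Ass_R\Coker\phi_R(M)\subseteq\Ass_RM$ is false, and so is the statement as formulated.

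Let $(R,\mathfrak m)$ be any local ring that is not complete (e.g.\ a non-complete DVR) and take $M=E$. Then $\Ass_RE=\{\mathfrak m\}$, $D_R(E)\cong\hat R$ and $D_R^2(E)\cong\Hom_R(\hat R,E)$. By adjunction $\Hom_R(\Bbbk,D_R^2(E))\cong\Hom_R(\hat R/\mathfrak m\hat R,E)\cong\Hom_R(\Bbbk,E)$, so $\mu(\mathfrak m,E)=1=\mu(\mathfrak m,D_R^2(E))$, and the hypothesis holds for every prime in $\Ass_RE$. However, $\phi_R(E)$ followed by restriction $\Hom_R(\hat R,E)\to\Hom_R(R,E)=E$ is the identity. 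Dualizing $0\to R\to\hat R\to\hat R/R\to0$ therefore gives $\Coker\phi_R(E)\cong D_R(\hat R/R)$. This is nonzero because $\hat R\neq R$ and $E$ is a cogenerator, so $E$ is not Matlis reflexive. Moreover $\Hom_R(\Bbbk,D_R(\hat R/R))\cong D_R(\hat R/(R+\mathfrak m\hat R))=0$. Hence all associated primes of the cokernel are non-maximal, and none lies in $\Ass_RE$.

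In the DVR case (uniformizer $t$), $\hat R/R$ is torsion-free and divisible, since $R\cap t^n\hat R=t^nR$ and $\hat R=R+t^n\hat R$. So it is a nonzero $Q$-vector space, and so is its dual $D_R(\hat R/R)$, which is a direct summand of $D_R^2(E)$. Thus the torsion module $E$ has a double dual of positive rank. The lemma you wanted (the double dual of a torsion module over a domain is torsion) is false, and the fallback via maximal elements of $\Ass_RC$ cannot rescue a false conclusion.

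The paper's own proof breaks at the same place. After passing to $S=R/\mathfrak p$ and $N=\Hom_R(R/\mathfrak p,M)\neq0$, it applies the hypothesis at $\mathfrak p$ and declares $\mathfrak p\in\Ass_RM$. But $N\neq0$ does not give $N$ positive rank over $S$ (take $M=E$, $\mathfrak p=0$, $N=E$).

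What your Step~1 does prove is the correct, Z\"oschinger-type statement: if $\mu(\mathfrak p,M)=\mu(\mathfrak p,D_R^2(M))$ for all $\mathfrak p\in\Ass_R C$ with $C=\Coker\phi_R(M)$, then $M$ is reflexive. A convenient sufficient condition is equality for all $\mathfrak p\in\Ass_RD_R^2(M)$, which contains $\Ass_RC$ by your exact sequence. For such $\mathfrak p$, your additivity gives $\mu(\mathfrak p,D_R^2(M))\geq\mu(\mathfrak p,C)>0$. Theorem~\ref{mat-13}(a) then makes the common value finite, which forces $\mu(\mathfrak p,C)=0$, a contradiction.
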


\begin{proof}
	It will be enough to show the only if part. Assume the contrary. Then $\Coker \phi_R(M) \not= 0$ 
	and there is a prime ideal $\mathfrak{p} \in \Ass_R \Coker_R(M)$. Therefore 
	\[
	0 \not= \Hom_R(R/\mathfrak{p},\Coker \phi_R(M)) \cong
	\Coker \phi_{R/\mathfrak{p}} (\Hom_R(R/\mathfrak{p},M))
	\]
	(see \ref{mat-11}). This implies $\Hom_R(R/\mathfrak{p},M) = 0:_M \mathfrak{p} \not= 0$ and 
	$$
	(\Coker \phi_{R/\mathfrak{p}} (\Hom_R(R/\mathfrak{p},M)))_{\mathfrak{p}} \not=0.
	$$
	Now we pass to the domain $S = R/\mathfrak{p}$ with its quotient field $Q$ and the non-zero $S$-module $N = \Hom_R(R/\mathfrak{p},M)$. By the assumption $0 \not= \dim_Q N \otimes_SQ = \dim_Q D_S^2(N) \otimes_SQ$ and these numbers are finite as follows by virtue of \ref{mat-13}. That is, $\mathfrak{p} \in \Ass_R M$ and the injection of the finite dimensional $Q$-vector spaces $N\otimes_SQ \to 
	D_S^2(N) \otimes_SQ$ is an isomorphism. Therefore 
	$$
		(\Coker \phi_{R/\mathfrak{p}} (\Hom_R(R/\mathfrak{p},M)))_{\mathfrak{p}} \cong (\Coker \phi_S(N)) \otimes_S Q = 0,
	$$
	a contradiction. 
\end{proof}

Dual to the notion of associated prime ideals of a module  Z\"oschinger (see \cite{Zh3}) studied  the notion 
of coassociated prime ideals. We say $\mathfrak{p} \in \Spec R$ is coassociated to $M$ whenever there is a 
submodule $N\subseteq M$ such that $M/N$ is Artinian and $\Ann_R M/N = \mathfrak{p}$. The set of all 
coassociated prime ideals of $M$ is denoted by $\operatorname{Coass}_R M$. In case of a local ring $(R,\mathfrak{m})$ it follows that $\mathfrak{p} \in \operatorname{Coass}_R M$ if and only if $\mathfrak{p} 
\in \Ass_R D_R(M)$. Moreover, $M/\mathfrak{p}M \not= 0$ if and only if $\mathfrak{p} \in \operatorname{Coass}_R M$ because $D_R(M/\mathfrak{p}M )\cong \Hom_R(R/\mathfrak{p},D_R(M))$. It is an open 
question to the author whether 
a result corresponding to \ref{mat-14} holds for $\rho(\mathfrak{p}, \cdot)$ and the coassociated primes.

\section{Injective Hulls}

For several reasons there is some interest in understanding the 
Matlis dual of an injective $R$-module, in particular the dual of 
$E_R(R/\mathfrak{p}), \mathfrak{p} \in \Spec R$, 
the components of any injective $R$-module $I$. 
For an arbitrary index set $X$ and an $R$-module $M$ we use the notations 
$M^{(X)}$ and $M^{X}$ as defined above. 
As above, for an $R$-module 
It was shown by Enochs (see \cite[Theorem 4.3]{Ee} and 
also \cite[3.3.14, 3.4.1]{EJ})
\[
\Hom_R(E_R(R/\mathfrak{p}), E_R) \cong \Hom_R(E_R(R/\mathfrak{p}), E_R(R/\mathfrak{p})^{(X_\mathfrak{p})}) 
\cong \widehat{R_{\mathfrak{p}}^{(X_{\mathfrak{p}})}}.
\]
for a certain set $X(\mathfrak{p})$. Matlis'  structure theory shows that the cardinality 
$\tau_\mathfrak{p} := \operatorname{card}X_{\mathfrak{p}}$ is the dimension of the $k(\mathfrak{p})$-vector space $\Hom_R(k(\mathfrak{p}),E_R)$.
The adjunction formula gives the 
following isomorphisms
\[
\Hom_R(k(\mathfrak{p}), E_R) \cong \Hom_{R/\mathfrak{p}}(k(\mathfrak{p}), \Hom_R(R/\mathfrak{p},  E_R)) \cong 
\Hom_{R/\mathfrak{p}}(k(\mathfrak{p}), E_{R/\mathfrak{p}}).
\]
For the last isomorphism note that $\Hom_R(R/\mathfrak{p}, E_R) \cong E_{R/\mathfrak{p}}$.
Therefore $\Hom_R(E_R(R/\mathfrak{p}), E_R)$ is the completion of a free $R_{\mathfrak{p}}$-module of rank $\tau_{\mathfrak{p}}$
with
 \[
\tau_{\mathfrak{p}} = \dim_{k(\mathfrak{p})} \Hom_R(k(\mathfrak{p}), E_R) = 
\dim_{k(\mathfrak{p})}  \Hom_{R/\mathfrak{p}}(k(\mathfrak{p}), E_{R/\mathfrak{p}}).
\]
That is, we  may pass to the domain $S := R/\mathfrak{p}$ with its quotient field $Q$. That is,
\[
\Hom_S(Q,E_S) \cong \Hom_S(Q,Q^{(X_{\mathfrak{p}})}) \cong Q^{(X_{\mathfrak{p}})}
\]
with $\tau_{\mathfrak{p}} = \operatorname{card}X_{\mathfrak{p}} = \dim_Q \Hom_S(Q,E_S)$. 
Note that $E_S(S) \cong Q$, the quotient field of $S$.
For a prime ideal 
$\mathfrak{p} \in \Spec R$ we call $\tau_{\mathfrak{p}}$ the Enochs number of $\mathfrak{p}$. 
An overview of certain values for $\tau_{\mathfrak{p}}$ is given in the following.

\begin{remark} \label{mat-16}
	(A) Let $\mathfrak{p}$ denote a one dimensional prime 
	ideal of a local ring $(R, \mathfrak{m})$. 
	Suppose that $R/\mathfrak{p}$ is a Gorenstein ring and analytically unramified. 
	All the cardinal numbers $\tau_{\mathfrak{p}}$ that occur for particular examples of rings are exactly 
	\begin{itemize}
		\item[(1)] any infinite cardinal number,
		\item[(2)] among the finite cardinal numbers exactly those of the form $p^t, p$
		being a prime number and $t$ a non-negative integer.
		\item[(3)] $\tau_{\mathfrak{p}} =1$ if and only if $R/\mathfrak{p}$ is complete. 
	\end{itemize} 
	When $\tau_{\mathfrak{p}}$ is finite and not equal to 1 $R/\mathfrak{p}$ is necessarily of prime characteristic. 
	For the details see \cite[Theorem 1.2]{Sp15} and also \cite{Sp16}.\\
	(B) Let $S$ be a one dimensional local  domain with quotient field $Q$. The obstruction for the 
	completeness of $S$ is given by $\hat{S}/S \cong \Ext_S^1(Q,S)$  (see \cite{Sp16}). Then the possible values 
	for $\dim_Q \hat{S}/S$ are shown by C. U. Jensen (see \cite{Jcu}). In his construction C. U. Jensen used  modifications of Nagata's "bad" local rings see \cite{Nm}).
\end{remark}

In the following we shall characterize those prime ideals $\mathfrak{p} \in \Spec R$ such that 
$\tau_{\mathfrak{p}} = 1$. This improves one of the author's results (see \cite{Sp16}). Note that $\tau_{\mathfrak{m}} = 1
$.
\begin{theorem} \label{mat-17}
	Let $\mathfrak{p}\not= \mathfrak{m}$ denote a prime ideal of a local Noetherian ring $(R,\mathfrak{m})$.
	Then $\tau_{\mathfrak{p}} =1$ if and only if $\dim R/\mathfrak{p} = 1$ and $R/\mathfrak{p}$ 
	is complete. 
\end{theorem}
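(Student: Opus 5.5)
We first pass to the domain $S = R/\mathfrak{p}$ with quotient field $Q$. By the discussion preceding Remark \ref{mat-16}, $\tau_{\mathfrak{p}} = \dim_Q \Hom_S(Q,E_S)$, and $\mathfrak{p} \neq \mathfrak{m}$ means $\dim S \geq 1$. So we must show that $\dim_Q \Hom_S(Q,E_S) = 1$ if and only if $\dim S = 1$ and $S$ is complete.

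For the ``if'' direction, assume $\dim S = 1$ and $S$ is complete. Then $S$ is Matlis reflexive and $D_S(E_S) \cong \hat{S} = S$. The short exact sequence $0 \to S \to Q \to Q/S \to 0$ dualizes to
\[
0 \to D_S(Q/S) \to D_S(Q) \to D_S(S)=E_S \to 0 .
\]
Since $\dim S = 1$, the module $Q/S$ is supported only at the maximal ideal, hence $\mathfrak{n}$-torsion, so $Q/S \cong E_S^{(Y)}$ for some set $Y$ and $D_S(Q/S) \cong \hat{S}^Y$. On the other hand $D_S(Q)=\Hom_S(Q,E_S)$ is a $Q$-vector space, so it is torsion-free and divisible. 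Alternatively, one can use $\Ext^1_S(Q,S)\cong \hat S/S = 0$ (Remark \ref{mat-16} (B)). Then $\Hom_S(Q,E_S) \otimes_S Q$ is computed via $\Hom_S(Q, Q/S)$ and $\Hom_S(Q,Q)=Q$: we apply $\Hom_S(Q,\cdot)$ to $0\to S\to Q\to Q/S\to 0$ and use $\Hom_S(Q,S)=0$ and $\Ext^1_S(Q,S)=0$, which gives $\Hom_S(Q,Q/S)\cong Q$. Since $E_S$ is a direct summand of $Q/S$ (indeed $Q/S \cong E_S^{(Y)}$ and $\Hom_S(Q,\cdot)$ is additive over the relevant summands), we obtain $\dim_Q \Hom_S(Q,E_S) \le 1$. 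It is nonzero because $E_S$ is divisible and not reduced. This is the argument of \cite{Sp16}, and we cite it for the precise identification.

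For the ``only if'' direction, suppose $\tau_{\mathfrak{p}} = 1$, that is, $\Hom_S(Q,E_S) \cong Q$. We shall show that $S$ is complete by exploiting Theorem \ref{mat-13} (a) over $S$ with $\mathfrak{p}=0$ and a suitable module $M$, for which $\mu(0,M)=\mu(0,D_S^2(M))\neq 0$. The natural candidate is $M = Q$ itself: $\mu(0,Q) = 1$, and $D_S^2(Q) = D_S(\Hom_S(Q,E_S)) \cong D_S(Q)$. Since $D_S(Q) \cong Q$ by hypothesis, we get $D_S^2(Q)\cong D_S(Q) \cong Q$, so $\mu(0,D_S^2(Q)) = 1$. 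Theorem \ref{mat-13} (a) then yields that $S$ is complete, and in fact Theorem \ref{mat-14} shows that $Q$ is Matlis reflexive over $S$.

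It remains to get $\dim S = 1$, which we expect to be the main obstacle. Since $Q$ is Matlis reflexive, Recall \ref{mat-1} (C) gives a finitely generated submodule $N \subseteq Q$ with $Q/N$ Artinian. We may take $N \neq 0$, say $N \cong$ a nonzero ideal, and after scaling $N \supseteq S$ inside $Q$, so $Q/S$ maps onto $Q/N$ with finitely generated kernel $N/S$. Then $Q/S$ is an extension of an Artinian module by a finitely generated one. For any nonzero prime $\mathfrak{q}$ of $S$ we consider the localization $S_{\mathfrak{q}}$. If $\dim S \ge 2$, we choose $\mathfrak{q}$ with $0 \ne \mathfrak{q} \neq \mathfrak{n}$. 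Artinian modules vanish on localizing at non-maximal primes, so $(Q/S)_{\mathfrak{q}} = Q/S_{\mathfrak{q}}$ would be finitely generated over $S_{\mathfrak{q}}$. That would force $Q$ to be a finitely generated $S_{\mathfrak{q}}$-module and hence $S_{\mathfrak{q}} = Q$ by Nakayama, contradicting $\mathfrak{q}\neq 0$. Therefore $\dim S = 1$. The delicate points to verify are the identification $D_S(Q)\cong Q$ in the reflexivity step and the existence of $N$ containing $S$, which is easy since $N$ may be enlarged by $S$ while keeping it finitely generated.
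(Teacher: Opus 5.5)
Your ``only if'' direction is correct and is essentially the paper's argument: $Q$ is reflexive, so $S$ is complete, and the localization argument at a prime $0\neq\mathfrak{q}\neq\mathfrak{n}$ rules out $\dim S\ge 2$. The detour through Theorems \ref{mat-13} and \ref{mat-14} is unnecessary. Any $S$-linear map $Q\to Q$ is multiplication by an element of $Q$, so the injection $Q\to D_S^2(Q)\cong Q$ is already bijective.

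\textbf{The gap is in the ``if'' direction.} You claim that $Q/S$, being $\mathfrak{n}$-torsion, satisfies $Q/S\cong E_S^{(Y)}$. This is false: $\mathfrak{n}$-torsion does not imply injective. If $Q/S$ were injective, then $0\to S\to Q\to Q/S\to 0$ would be an injective resolution of length one, so $S$ would be Gorenstein. For a non-Gorenstein complete one-dimensional domain such as $\Bbbk[[t^3,t^4,t^5]]$, the module $Q/S\cong H^1_{\mathfrak{n}}(S)$ is not injective. Moreover, $E_S$ is not even a direct summand of it: dually, $S$ would split off the rank-one torsion-free module $D_S(Q/S)\cong\omega_S$, forcing $\omega_S\cong S$.

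Consequently, your upper bound $\dim_Q\Hom_S(Q,E_S)\le 1$ is only proved in the Gorenstein case. The appeal to \cite{Sp16} does not close this, since Remark~\ref{mat-16} records that result only under Gorenstein and analytic unramifiedness hypotheses. Your lower bound is fine, but the correct reason is simply that $D_S(Q)\neq 0$ because $Q\neq 0$.

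\textbf{How to repair it.} Argue via reflexivity, as the paper does, rather than via the structure of $Q/S$. Since $S$ is finitely generated and complete and $Q/S\cong H^1_{\mathfrak{n}}(S)$ is Artinian, $Q$ is Matlis reflexive by Recall~\ref{mat-1}. Write $D_S(Q)\cong Q^{(X)}$. Then
\[
Q\cong D_S^2(Q)\cong \bigl(Q^{(X)}\bigr)^X,
\]
which is one-dimensional over $Q$ only if $\operatorname{card}X=1$.

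Alternatively, your dual sequence $0\to D_S(Q/S)\to D_S(Q)\to E_S\to 0$ can be salvaged. Tensor it with $Q$ and use $E_S\otimes_S Q=0$ to get $D_S(Q)\cong D_S(Q/S)\otimes_S Q$. You would then still need that the finitely generated module $D_S(Q/S)$ has rank one. This holds by local duality, since $D_S(Q/S)\cong\omega_S$, but it is an additional input beyond the paper's argument.
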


\begin{proof}
	First remark that we may pass to the domain $S= R/\mathfrak{p}$, see the beginning of this 
	section. Then let $S$ denote a one-dimensional complete domain with $Q$ its quotient field. Since $Q/S$ 
	is torsion there is the short exact sequence $0 \to S \to Q \to H^1_{\mathfrak{m}}(S) \to 0$. Since $S$ is 
	complete and $H^1_{\mathfrak{m}}(S)$ is Artinian it follows that $Q$ is Matlis reflexiv. That is 
	$\dim_Q \Hom_S(Q,E_S)= 1$. 
	
	For the converse let $\dim_Q \Hom_S(Q,E_S) =1$. Then  $Q$ is Matlis reflexive. 
	The embedding $S \subset  Q$ implies that $S$ and $Q/S$ are Matlis reflexive. In particular, 
	$S$ is complete. Moreover $Q/S$ is Matlis reflexive as well. Therefore there 
	is a finitely generated submodule $N \subset Q/S$ such that the factor module is Artinian 
	(see \cite{Ee} or \cite{Zt}). Suppose that $\dim S > 1$. Then there is a prime ideal $\mathfrak{q} 
	\in \Supp_S Q/S$ with $\height \mathfrak{q} \geq 1$. Localizing at $\mathfrak{q}$ shows that 
	$(Q/S)_{\mathfrak{q}}$ is a finitely generated $S_{\mathfrak{q}}$-module. Because of $Q = Q(S_{\mathfrak{q}})$ 
	it follows that $Q(S_{\mathfrak{q}})$ is a finitely generated $S_{\mathfrak{q}}$-module. This is absurd 
	because of $\dim S_{\mathfrak{q}} \geq 1$, whence a contradiction to $\dim S > 1$.  That $\dim S =1$ follows also by 
	a different argument used by Daily and Marley (see \cite{DM} in the proof of their Proposition 3.4).
\end{proof}

We conclude with an example suggested by L. Winther Christensen et al. (see \cite{WFT}). 

\begin{remark} \label{ex-2} (see \cite[Remark 5.3]{WFT})
	Let $(R,\mathfrak{m})$ denote a local ring with $E_R = E_R(R/\mathfrak{m})$ its injective hull and $\dim R > 0$.  Let $n$ 
	denote a positive integer.  Because  $R/\mathfrak{m}^n$ is an $R$-module of finite length  
	there is an injection $R/\mathfrak{m}^n \to E_R^{\mu_n}$ with $\mu_n = \dim_\Bbbk \Hom_R(\Bbbk, R/\mathfrak{m}^n) 
	<\infty$ and  a commutative  diagram with exact rows 
		\[
		\xymatrix{0 \ar[r]& R/\mathfrak{m}^{n+1} \ar[r] \ar[d] &E_R^{\mu_{n+1}} \ar[d]^{\alpha_n} \\
			0  \ar[r] & R/\mathfrak{m}^n \ar[r] & E_R^{\mu_n} 
			}
		\]
		since $E_R^{\mu_n}$ is injective. We get an injection of inverse systems 
		$0 \to \{R/\mathfrak{m}^n\}_{n \geq 1} \to \{E_R^{\mu_n}\}_{n \geq 1} $. 
	Then the limit provides  an injection $0 \to \hat{R} \to E_R[|T|]$. 
	This is similar to the construction 
	in \cite[Remark 5.3]{WFT} 
	where an injection $0 \to R \to E_R[|T|]$ is shown. Because of $R \subseteq \hat{R}$ 
	we get the same with the additional information that it factors through $\hat{R}$. 
\end{remark}

In the following we shall add a few remarkable properties of the injective $R$-module $E_R[|T|]$. 
A question could be for which prime ideals $\mathfrak{p}$ 
the injective hull $E_R(R/\mathfrak{p})$ occurs with which multiplicity. 

\begin{example} \label{ex-1} (A)
	With the notation of \ref{ex-2}   we claim  the following 
	\[
	\mathfrak{p} \in \Ass_R  E[|T|] \, \text{ for any } \mathfrak{p} \in \Spec R \mbox{ and }  
		\mathfrak{P} \in \Ass_{\hat{R}}  E[|T|] \, \text{ for any } \mathfrak{P} \in \Spec \hat{R}
	\]
	Because of $ 0 \not= E_{R/\mathfrak{p}} \cong \Hom_R( R/\mathfrak{p},E_R)$ it follows 
	$ 0 \not= E_{R/\mathfrak{p}}[|T|] \cong \Hom_R(R/\mathfrak{p}, E_R[|T|])$
	and $\mathfrak{p} \in \Ass_R  E_R[|T|].$ Moreover, $E \cong E_{\hat{R}}$ and the second claim follows 
	as before (see also \cite[17.1.10]{WFH} for the author's argument). 
	 \\
	(B) Because $E[|T|]$ is an injective $R$-module $E[|T|] \cong 
	\oplus_{\mathfrak{p} \in \Spec R} E_R(R/\mathfrak{p})^{(Y_{\mathfrak{p}})}$ (see \cite{Me} or \cite{EJ}) with 
	$ \mu_{\mathfrak{p}} := \dim_{k(\mathfrak{p})} \Hom_{R/\mathfrak{p}}(k(\mathfrak{p}),(E[|T|])_{\mathfrak{p}})$ and $\operatorname{card} Y_{\mathfrak{p}} =
	  \mu_{\mathfrak{p}} $ (see also \ref{mat-12} (A) for the notation). 
	Let $\mathfrak{p}$ denote a prime ideal of $R$.
	Then  $\mathfrak{p} \in \Ass_R E[|T|]$ and $E_R(R/\mathfrak{p})$ is a direct summand of $E[|T|]$ (see 
	\cite[Lemma 1]{MeS}). That is $ \mu_{\mathfrak{p}}   \not=  0$ for all $\mathfrak{p}$. These numbers seem to be "highly" non finite, e.g. $\operatorname{card} Y_{\mathfrak{m}}=  \mu_{\mathfrak{m}}  = \dim_{\Bbbk} \Bbbk[|T|]$. \\
	(C)
	Next we investigate the Matlis dual $D_R( E[|T|])$. With the presentation above it yields that 
	\[
	\Hom_R(E[|T|], E) \cong \prod_{\mathfrak{p}} \Hom_R(E_R(R/\mathfrak{p}),E)^{Y_{\mathfrak{p}}}  
	\cong  \prod_{\mathfrak{p}}\big( \widehat{R_{\mathfrak{p}}^{(X_{\mathfrak{p}})}}\big)^{Y_{\mathfrak{p}}}.
	\]
	To this end recall the expression of $\Hom_R(E_R(R/\mathfrak{p}),E)$ by Enochs (see also the beginning 
	of Section 4). For the endomorphism ring of $E[|T|]$ it yields 
	\[
	\Hom_R(E[|T|], E[|T|]) 
	\cong ( \prod_{\mathfrak{p}}\big( \widehat{R_{\mathfrak{p}}^{(X_{\mathfrak{p}})}}\big)^{Y_{\mathfrak{p}}})^{\mathbb{N}}.
	\]
	 For completeness let us consider the injective $R$-module $E[T]$ as a submodule of $E[|T|]$ and therefore as a direct summand. It is easy to see 
	that $D_R(E[T]) = \Hom_R(E[T], E) \cong \hat{R}[|T|]$. For the endomorphism ring it follows that 
	$\Hom_R(E[T],E[T]) \cong \Hom_R(E,E[T])[|T|]$. Now  $\Hom_R(E,E[T]) \cong  \widehat{R^{(\mathbb{N})}}$
	(see  \cite[3.4.1]{EJ}) and therefore 
	$\Hom_R(E[T],E[T]) \cong (\widehat{R^{(\mathbb{N})}})^\mathbb{N}$.
\end{example}

While the endomorphism ring of $E$ is the completion it follows that the 
endomorphism rings of the injective modules $E[T]$ and $E[|T|]$ are rather "big" flat modules. 


\bibliographystyle{siam}

\bibliography{hart-1}

\end{document}